\documentclass[12pt]{amsart}
\usepackage{amscd}

\usepackage{amssymb}

\newcommand{\bT}{{\mathbb T}}

\newcommand{\bZ}{{\mathbb Z}}
\newcommand{\bR}{{\mathbb R}}

\newcommand{\bA}{{\mathbb A}}

\newcommand{\bC}{{\mathbb C}}

\newcommand{\la}{{\langle}}
\newcommand{\ra}{{\rangle}}

\newtheorem{thm}{Theorem}[section]
\newtheorem{lemma}[thm]{Lemma}
\newtheorem{cor}[thm]{Corollary}

\setlength{\textwidth}{130mm}
\numberwithin{equation}{section}

\begin{document}

\title[]{Definitions and examples of  algebraic Morava  K-theories}

 \author{Nobuaki Yagita}

\address{[N. Yagita]
Department of Mathematics, Faculty of Education, Ibaraki University,
Mito, Ibaraki, Japan}

%\keywords{birational invariant, coniveau filtrations,
%unramified cohomology｝																bjclass[2010]{Primary 20G10, 55R35, 14C15 ; 
%Seconarｙ 57T25}

\email{nobuaki.yagita.math@vc.ibaraki.ac.jp}

\subjclass[2020]{ 14C15, 14P99, 55N20}

\keywords{algebraic cobordism,
Morava K-theory}

\begin{abstract}
Algebraic Morava K-theories are defined by  Sechin, Vishik, and others [Vi], [Ge-La-Pe-Se] as quotients of algebraic cobordism theories, which are oriented theories but not cohomology theories.  On the other hand the author had
  defined the Morava theories [Ya7] as (the double degree) generalized cohomology theories. 
We compare these Morava-K-theories.
\end{abstract}

\maketitle

\section{Introduction}

In the $\bA^1$-stable homotopy category,
over a field $k$ with $ch(k)=0$,  Voevodsky
[Vo1] defined the space (spectrum) $MGL(k)$ which defines the algebraic
(motivic) cobordism
\[ MGL^{*,*'}(X)\cong Mor( X, \ \bT^{*,*'}\wedge  MGL(k))
\]   where $Mor(-,-)$ is the group of morphisms in this category.

Soon later,  Levine-Morel [Le-Mo1,2] defined an another algebraic cobordism
$\Omega^*(X)$  for a smooth $X$ such that 
it is a universal oriented (for the formal groups laws).
In particular, Levine shows [Le]
\[\Omega^*(X)\cong MGL^{2*,*}(X)\quad for\ smooth \ X.\]
We will extend the above formula for the Morava $K$-theory, and compute
their examples. 

We first recall the Brown-Peterson cohomology (for details see $\S2$  below or [No]) such that \
\[ \Omega^*(pt.)_{(p)}\supset BP^*=\bZ_{(p)}[v_1,v_2,...] 
\quad |v_i|=-2p^i+2.\]
(For a topological cohomology theory $A^*(X)$, the notation $A^*$ means  its coefficients $A^*(pt)$, 
 but  for the algebraic case it means $A^{2*,*}(pt)$, here.)

Next recall that  
  the (topological) Morava $K$-theory
$K(n)^*(X)$ is a generalized cohomology theory
with the coefficient $K(n)^*=\bZ/p[v_n,v_n^{-1}]$. 
 In $\S 2$, we can construct [Ya7] the algebraic 
Morava K-theory $AK(n)^{*.*'}(X)$ with
$deg(v_n)=(-2p^n+2,-p^n+1)$.

We also recall the (topological) cobordism theory  $P(n)^*(X)$ in $\S 3$ and its algebraic version
$AP(n)^{*,*'}(X)$ such that
$ P(n)^*=\bZ/p[v_n,v_{n+1},...]$.
We define  
\[ AK(n)^{*,*'}(X) =  K(n)^*\otimes _{BP^*}AP(n)^{*,*'}(X).
\]

Let us write the ideal  $I_n=(p,v_1,...,v_{n-1})\subset BP^*$ so that $P(n)^*\cong BP^*/I_n$. 
Then we have (in $\S 3$)
\begin{lemma} 
For a smooth $X$, we have
\[ \Omega^*(X)/I_n
\cong \Omega(X)^*\otimes _{BP^*}P(n)^{*}\cong AP(n)^{2*,*}(X), \]
\[\Omega^*(X)\otimes _{BP^*}K(n)^*\cong
AK(n)^{2*,*}(X).\]
\end{lemma}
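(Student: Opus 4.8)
The plan is to reduce the whole statement to one vanishing result for $MGL$. First, $\Omega^*(X)/I_n\cong\Omega^*(X)\otimes_{BP^*}P(n)^*$ is formal: since $P(n)^*=BP^*/I_n$, one has $M\otimes_{BP^*}BP^*/I_n\cong M/I_nM$ for any $BP^*$-module $M$, applied to $M=\Omega^*(X)$ regarded as a $BP^*$-module through the ($p$-typical) coefficient homomorphism --- as $p\in I_n$, everything here is tacitly $p$-local. Hence it suffices to prove $\Omega^*(X)\otimes_{BP^*}P(n)^*\cong AP(n)^{2*,*}(X)$; the second line then follows by base change, because $AK(n)^{*,*'}(X)=K(n)^*\otimes_{BP^*}AP(n)^{*,*'}(X)$ by definition, $K(n)^*$ lies in diagonal bidegrees (so extracting the $(2*,*)$-part commutes with $K(n)^*\otimes_{BP^*}(-)$), and $P(n)^*\otimes_{BP^*}K(n)^*=K(n)^*$, giving
\[AK(n)^{2*,*}(X)\cong K(n)^*\otimes_{BP^*}AP(n)^{2*,*}(X)\cong\Omega^*(X)\otimes_{BP^*}K(n)^*.\]

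For the remaining isomorphism, recall from $\S 3$ (following [Ya7]) that $AP(n)$ is obtained from the algebraic Brown--Peterson spectrum $ABP$ by successively killing $p,v_1,\dots,v_{n-1}$: $ABP/(p,\dots,v_i)$ is the cofibre of $v_i\colon\Sigma^{2(p^i-1),\,p^i-1}ABP/(p,\dots,v_{i-1})\to ABP/(p,\dots,v_{i-1})$, with $AP(n)=ABP/(p,v_1,\dots,v_{n-1})$. By Levine's theorem $MGL^{2*,*}(X)\cong\Omega^*(X)$, and the Quillen idempotent identifies $ABP^{2*,*}(X)$ with $\Omega^*(X)$ regarded as a $BP^*$-module (its $p$-typical part). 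The key input is that, for $X$ smooth over $k$ with $ch(k)=0$,
\[MGL^{p,q}(X)=0\qquad\text{for all }p>2q.\]
This comes from the slice spectral sequence, whose $E_2$-page has the shape $\bigoplus_j\mathbb{L}_j\otimes H^{*-2j,\,*'-j}(X,\bZ)$, together with the vanishing $H^{a,b}(X,\bZ)=0$ for $a>2b$ --- the Cousin resolution of the sheaf $\mathcal H^j(\bZ(b))$ has length $\le j\le b$, so the complex $\bZ(b)$ has Zariski hypercohomological amplitude $\le 2b$. The same argument applies to $ABP$ (or one uses that it is a summand of $MGL_{(p)}$), and an induction on $i$ using the cofibre sequences gives $AP(i)^{p,q}(X)=0$ for $p>2q$ and all smooth $X$.

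Now the cofibre sequences degenerate on the diagonal. Applying $(-)^{2m,m}(X)$ to $\Sigma^{2(p^i-1),\,p^i-1}AP(i)\xrightarrow{v_i}AP(i)\to AP(i+1)$ gives a long exact sequence whose only ``error'' contribution to $AP(i+1)^{2m,m}(X)$ is $\Ker\!\big(v_i\colon AP(i)^{2m+2(p^i-1)+1,\,m+(p^i-1)}(X)\to AP(i)^{2m+1,m}(X)\big)$, and this vanishes by the previous paragraph since $2m+2(p^i-1)+1>2\big(m+(p^i-1)\big)$. So the sequence collapses to $AP(i+1)^{2*,*}(X)\cong AP(i)^{2*,*}(X)/v_i$. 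Iterating from $AP(0)=ABP$, with $ABP^{2*,*}(X)\cong\Omega^*(X)$ as $BP^*$-modules, yields $AP(n)^{2*,*}(X)\cong\Omega^*(X)/I_n\cong\Omega^*(X)\otimes_{BP^*}P(n)^*$, which is the first line; the $K(n)$-statement then follows as in the first paragraph.

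The only non-formal ingredient --- and the main obstacle --- is the vanishing $MGL^{p,q}(X)=0$ for $p>2q$ on smooth $X$ and its transport along the Quillen splitting and the iterated cofibre sequences; this is where smoothness and $ch(k)=0$ really enter, through convergence of the slice spectral sequence and the amplitude of the motivic complexes $\bZ(b)$. Once this is granted, the rest is bookkeeping with long exact sequences and with base change of $BP^*$-modules.
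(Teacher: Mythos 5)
Your argument is correct and is essentially the paper's own proof: the paper likewise inducts over the cofibre sequences (2.1) killing $p,v_1,\dots,v_{n-1}$ and kills the boundary term using the vanishing $Ah^{m,n}(X)=0$ for $m>2n$ on smooth $X$ (its (2.3), fed through the Atiyah--Hirzebruch/slice spectral sequence of Theorem 2.1), which is exactly your key vanishing $MGL^{p,q}(X)=0$ for $p>2q$. The remaining formal steps --- identifying $\Omega^*(X)/I_n$ with $\Omega^*(X)\otimes_{BP^*}P(n)^*$ and base-changing to $K(n)^*$ --- are treated as immediate in the paper as well.
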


  On the other side, Sechin , Vishik and others [Vl], [Ge-La-Pe-Se]  define the (algebraic) $K(n)^*(X)$ by directly \
 $\Omega^*(X)\otimes _{BP^*}K(n)^*$
(write it here  by $OK(n)^*(X)$). 

To compute $\Omega^*(X),$ or $ \Omega(X)/I_n$ directly  is the rather difficult. In this paper, we first compute $ABP^{*,*'}(X),
AP(n)^{*.*'}(X)$ by using the Atiyah-Hirzebruch (type) spectral sequences.
Next we compute $OP(n)^*(X)$ and the last $OK(n)^*(X)$.

As examples for  Lemma 1.1, we consider the some cases $X=BG$ classifying spaces $BG$ of algebraic groups
in $\S 5$.
In $\S 6$, we consider 
the cases $X=G$ 
algebraic groups themselves.
In   $\S 7$, we see 
\begin{thm} Let $G$ be an algebraic group,
corresponding a simply connected Lie group
which has a $p$-torsion.
Then the algebraic group $G$ is not homotopy nilpotent in $\bA^1$-homotopy category.
\end{thm}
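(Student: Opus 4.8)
The plan is to detect $p$-torsion in the algebraic Morava $K$-theory of the classifying space $BG$ and to transport this information to $G$ itself, using the fact that homotopy nilpotence of $G$ in the $\mathbb A^1$-homotopy category would force a strong finiteness/vanishing condition on the associated Morava $K$-theories, contradicting the non-triviality coming from $p$-torsion in the topological realization. Concretely, first I would recall from the topological situation (Ravenel--Wilson, and the computations of Kono--Yagita and others) that a simply connected compact Lie group $G$ with $p$-torsion has $K(n)^*(BG)$ strictly larger than $K(n)^*(pt)$ for suitable $n$, and in fact that $K(n)^*(G)$ itself carries nontrivial higher structure; the transfer of this to the algebraic side is exactly what Lemma~1.1 together with the Atiyah--Hirzebruch-type spectral sequences developed in $\S 4$--$\S 6$ is designed to accomplish, so I would invoke those computations of $AK(n)^{*,*'}(G)$ and $OK(n)^*(G)$ to conclude that $OK(n)^*(G) \neq OK(n)^*(pt)$, or more precisely that $\Omega^*(G)\otimes_{BP^*} K(n)^*$ has a nonzero element in positive codimension that is not nilpotent.

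Second, I would connect homotopy nilpotence to the behaviour of $OK(n)^*$. The key input is the analogue of the classical theorem (Hopkins, and Rao, in the topological setting) that a homotopy nilpotent group-like space has trivial (reduced) Morava $K$-theory, or more precisely that the Morava $K$-theory Hurewicz-type map is constrained; in the $\mathbb A^1$-setting one expects that if $G$ is homotopy nilpotent then the algebraic Morava $K$-theory $OK(n)^*(G)$ must be "small" — for instance the unit map $OK(n)^*(pt) \to OK(n)^*(G)$ is an isomorphism after suitable completion, or the loop-space filtration collapses. I would make this precise by using the oriented-theory structure on $\Omega^*$ and on $OK(n)^*$: homotopy nilpotence gives a finite Whitehead-type tower, and applying $OK(n)^*(-)$ to each stage, the Euler classes / characteristic classes of the associated bundles must become nilpotent, which is incompatible with the non-nilpotent class produced in the first step.

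The main obstacle, which I expect to occupy most of the proof, is the second step: making rigorous the implication \emph{$G$ homotopy nilpotent in $\mathbb A^1$-homotopy $\Rightarrow$ $OK(n)^*(G)$ trivial (or suitably degenerate)}. In topology this rests on the nilpotence technology of the stable homotopy category (the thick subcategory / nilpotence theorems) which is not available in the same form motivically; I would instead argue semi-topologically, by applying the complex (or real) realization functor. A homotopy nilpotent algebraic group realizes to a homotopy nilpotent topological group $G(\mathbb C)$ (or its classifying-space analogue), and I would use that the realization of $OK(n)^*$ receives a map from, or maps to, the topological $K(n)^*$, compatibly with the oriented structure; then the topological theorem of Rao/Hopkins that simply connected Lie groups with $p$-torsion are \emph{not} homotopy nilpotent supplies the contradiction. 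The delicate points are: (i) checking that the realization of the $\mathbb A^1$-homotopy nilpotence structure is genuinely a topological homotopy nilpotence structure (degree bookkeeping between the bidegree $(*,*')$ and the single topological degree, via the comparison $A^{2*,*}(pt)=A^*$); and (ii) ensuring that the comparison map on Morava $K$-theories is nonzero on the relevant class, which is where Lemma~1.1 and the explicit $\S 6$ computations for $X=G$ are essential. Finally I would assemble these pieces: the explicit non-nilpotent class in $OK(n)^*(G)$ from $\S 6$, its nonvanishing under realization, and the topological non-nilpotence theorem, together forcing that $G$ cannot be homotopy nilpotent in the $\mathbb A^1$-homotopy category.
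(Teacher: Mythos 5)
Your overall shape (an explicit Morava--$K$-theoretic obstruction plus a comparison with topology via the realization functor $t_{\bC}$) is the right one, and your final fallback --- realize $G$ to $G(\bC)$, note that $\bA^1$-homotopy nilpotence would force topological homotopy nilpotence, and invoke the Rao/Hopkins-type theorem that a simply connected Lie group with $p$-torsion is not homotopy nilpotent --- is essentially what the paper does. The paper makes this concrete by citing [Ya4, Theorem 1.5]: in the Pontryagin ring $K(2)_*(G)$ one has $ad^{p-1}(y)(z)=-v_2z\neq 0$, where $y,z$ are dual to the classes $y,x$ in the quotient $\bZ/p[y]/(y^p)\otimes\Lambda(x,x')$ of $H^*(G;\bZ/p)$; it then observes that $x$ lives in $H^{3,3}(G;\bZ/p)\cong H^3(G(\bC);\bZ/p)$, so the relevant classes exist in $AK(2)_{*,*'}(G)$, and compatibility with $t_{\bC}$ transports the non-vanishing iterated commutator back to the algebraic side.

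The genuine gap in your write-up is the criterion you propose to link homotopy nilpotence to Morava $K$-theory. The implication ``$G$ homotopy nilpotent $\Rightarrow$ $OK(n)^*(G)$ trivial (or the unit map an isomorphism, or cup-product classes nilpotent)'' is false: torsion-free simply connected groups such as $SU(m)$ are homotopy nilpotent yet have large, nontrivial Morava $K$-theory. The correct mechanism, which is the heart of the paper's argument, concerns the \emph{Pontryagin product} on the homology $K(2)_*(G)$ (the product induced by the group multiplication, not the cup product on $K(2)^*(G)$ or on $K(n)^*(BG)$): homotopy nilpotence forces all iterated commutators $ad^i(y)(z)$ to vanish for large $i$, and the explicit computation $ad^{p-1}(y)(z)=-v_2z$, which is $v_2$-periodic and hence never killed by further iteration, contradicts this. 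Your detour through $BG$ and through non-nilpotent elements in the cohomology ring does not produce an obstruction to homotopy nilpotence, so as written your first two steps would not close; you would need to replace them by the Pontryagin-ring commutator computation (or cite Rao's theorem directly) before the realization step can do its work.
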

 In  $\S 8$,  we compute 
$AP(n)^{*,*'}(\tilde \chi_V)$  when $X$ is the reduced Ceck complex of the norm variety $V$.
In $\S 9$, we study the $K(n)$-theory of the norm variety over $k=\bR$.

\section{algebraic $BP$-theories}

At first, we recall 
the algebraic $MU^*$-theory $AMU^{*,*'}(X)$ 
from [Ya7].

For a topological space  $X$, recall that $MU^*(X)$ is the complex 
cobordism theory defined 
in the usual (topological) spaces and 
\[MU^*=MU^*(pt.)\cong \bZ[x_1,x_2,...]\quad |x_i|=-2i.\]
Here each $x_i$ is represented by a sum of hypersurfaces 
of $dim(x_i)=2i$ in some product of complex 
projective spaces ([Ha],[Ra]).
Let $MGL^{*,*'}(-)$ be the motivic cobordism theory defined by
Voevodsky [Vo1]. 
Let us write by $AMU$ the spectrum $MGL_{(p)}$ 
in the stable $\bA^1$-homotopy category representing this motivic cobordism theory (localized at $p$), i.e., 
\[ MGL^{*,*'}(-)_{(p)}=AMU^{*,*'}(-).\]
Here note that $AMU^{2*,*}(pt.)\cong MU_{(p)}^{2*}$.
It  is not  isomorphic to $AMU^{*,*'}(pt)$  in general,
 while $AMU^{*.*'}(X)$ is an $MU_{(p)}^*$-algebra.

Given a regular sequence $S_n=(s_1,...,s_n)$ with $s_i\in MU^*_{(p)}$,
we can inductively construct the $AMU$-module spectrum by
the cofibering of spectra ([Ya7,9], [Ra])
\[(2.1)\quad \bT^{-1/2|s_i|}\wedge AMU(S_{i-1}) \stackrel{\times 
s_i}{\longrightarrow}
AMU(S_{i-1})\to AMU(S_i)\]
where $\bT=\bA/(\bA-\{0\})$ is the Tate object.
For the realization map $t_{\bC}$ induced from $k\subset \bC$, it is also immediate that $t_{\bC}(AMU(S_n))\cong MU(S_n)$ with
$MU(S_n)^*=MU^*/(S_n).$ Therefore $t_{\bC}$ induces the natural map
\[ t_{\bC} \ :\  AMU(S)^{*,*'}(X)\to MU(S)^*(X). \]

Recall that the Brown-Peterson cohomology theory
$BP^*(-)$ with the coefficient  $BP^*\cong \bZ_{(p)}[v_1,v_2...]$ by
 identifying $v_i=x_{p^i-1}$. (So $|v_i|=-2(p^i-1)$.)
We can construct spectra (in the stable $\bA^1$-homotopy category)
\[ABP=AMU(x_i|i\not =p^j-1)\]
such  that $t_{\bC}(ABP)\cong BP$.
 For $S=(v_{i_1},...,v_{i_n})$, let us write
\[ABP(S)=AMU(S\cup\{x_i|i\not =p^j-1\})\]
so that $t_{\bC}(ABP(S))=BP(S)$ with $BP(S)^*=BP^*/(S)$.

In particular, let
$AH\bZ=ABP(v_1,v_2,...)$
so that $AH\bZ^{2*,*}(pt.)\cong \bZ_{(p)}$. 
 In the $\bA^1$-stable homotopy category,  
  Hopkins-Morel  showed that
\[ AH\bZ\cong H_{\bZ}\quad , i.e., \quad AH\bZ^{*,*'}(X)\cong H^{*,*'}(X,\bZ_{(p)})\]  
the (usual) motivic cohomology.  Using this result, we can construct the motivic Atiyah-Hirzebruch (type) spectral sequence. 
  \begin{thm} ([Ya7,9])   Let $Ah=ABP(S)$ for 
  $S=(v_{i_1},v_{i_2},...)$, and  
recall  
\[ h^{2*''}=BP^{2*''}/(S)\cong Ah^{2*'',*''}(pt).\]
Then there is AHss (the Atiyah-Hirzebruch
spectral sequence)
\[E(Ah)_2^{(*,*',2*'')}=H^{*,*'}(X;h^{2*''})\Longrightarrow Ah^{*+2*'',*'+*''}(X)\]
with the differential \quad 
$d_{2r+1}:E_{2r+1}^{(*,*',2*'')} \to 
E_{2r+1}^{(*+2r+1,*'-r,2*''-2r)}$.
\end{thm}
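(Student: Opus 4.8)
The plan is to realize this as the spectral sequence of a Postnikov--type tower of the motivic spectrum $Ah$ --- concretely, of its slice tower --- whose layers are, by the Hopkins--Morel identification $AH\bZ\cong H_\bZ$, finite wedges of $\bT$--suspensions of the motivic Eilenberg--MacLane spectrum; this is the route of [Ya7, Ya9]. First I would recall that, since $ch(k)=0$, Hopkins--Morel(--Hoyois) gives $AH\bZ\cong H_\bZ$, and that the slices of $AMU=MGL_{(p)}$ are known (Voevodsky's slice conjecture, proved by Levine and by Spitzweck): $s_q(AMU)\cong\bT^q\wedge(H_\bZ\otimes_\bZ MU^{-2q}_{(p)})$. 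Since $Ah=ABP(S)=AMU(T)$ is obtained from $AMU$ by killing a regular set $T$ of polynomial generators through the cofiber sequences $(2.1)$, which are compatible with the slice filtration, $Ah$ is effective and
\[ s_q(Ah)\ \cong\ \bT^q\wedge\bigl(H_\bZ\otimes_\bZ h^{-2q}\bigr),\qquad h^{-2q}=BP^{-2q}/(S),\]
a finite wedge of copies of $\bT^q\wedge H_\bZ$ indexed by a $\bZ_{(p)}$--basis of the free module $h^{-2q}$ (the monomials in the $v_i\notin S$ of total degree $-2q$). Writing $\cdots\to P_{q+1}Ah\to P_qAh\to\cdots\to P_1Ah=s_0(Ah)=H_\bZ$ for the slice tower, with $\mathrm{fib}(P_{q+1}Ah\to P_qAh)\simeq s_q(Ah)$, one has $Ah\simeq\mathrm{holim}_q P_qAh$ by convergence of the slice tower.

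Next I would apply the bigraded functor $X\mapsto[\Sigma^\infty X_+,\ \bT^{*,*'}\wedge(-)]$ to this tower; since $X$ is smooth of finite type, $\Sigma^\infty X_+$ is compact, so this commutes with the homotopy limit and produces an exact couple, hence the spectral sequence. The layer $s_{-*''}(Ah)$ contributes to $Ah^{*+2*'',\,*'+*''}(X)$ the group
\[ \bigl[\Sigma^\infty X_+,\ \bT^{*+2*'',\,*'+*''}\wedge s_{-*''}(Ah)\bigr]\ \cong\ \bigl[\Sigma^\infty X_+,\ \bT^{*,*'}\wedge\bigl(H_\bZ\otimes_\bZ h^{2*''}\bigr)\bigr]\ \cong\ H^{*,*'}(X;\,h^{2*''}),\]
using that $h^{2*''}$ is finitely generated free over $\bZ_{(p)}$. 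This is the $E_2$--page in the stated form, abutting to $Ah^{*+2*'',\,*'+*''}(X)$.

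For the differentials: each slice $s_q(Ah)$ is non--trivial only for $q\geq 0$ and has its coefficient concentrated in the single even topological degree $-2q$, so the parity argument of the classical Atiyah--Hirzebruch spectral sequence of $MU$ applies verbatim --- all even differentials vanish, and the surviving ones are $d_3,d_5,\dots$, where $d_{2r+1}$ is the higher differential of the exact couple that jumps $r$ slices. Its tridegree, obtained by transporting the boundary maps of the tower through the reindexing $2*''=-2q$ (and checked against the topological Atiyah--Hirzebruch spectral sequence via the realization $t_\bC$), is
\[ d_{2r+1}:\ E_{2r+1}^{(*,*',2*'')}\ \longrightarrow\ E_{2r+1}^{(*+2r+1,\ *'-r,\ 2*''-2r)}.\]

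The hard part is the first paragraph: the identification of the slices of $Ah$ and the convergence $Ah\simeq\mathrm{holim}_q P_qAh$. The former depends on the Hopkins--Morel--Hoyois theorem together with the computation of the slices of $MGL$; the latter --- convergence of the slice tower, hence of the spectral sequence --- is a genuine motivic theorem and is where $ch(k)=0$ is really used (via Levine's convergence result for the slice tower). Granting these, identifying $E_2$ with motivic cohomology and recording the tridegree of $d_{2r+1}$ are routine bookkeeping.
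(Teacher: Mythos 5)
Your argument is essentially correct as a proof strategy, but it is not the route of the paper: Theorem 2.1 is stated here with no proof at all, being imported from [Ya7,9], and the construction there does not go through the slice tower. In [Ya7] the tower is assembled by hand from the cofiber sequences (2.1): one filters the coefficient ring $h^*=BP^*/(S)$ by the span of monomials in the surviving generators $v_j$ of degree $\le -2q$, realizes the associated graded spectra by iterating (2.1), and identifies the layers with wedges of $\bT$-suspensions of $H_{\bZ}$ via the Hopkins--Morel equivalence $AH\bZ\cong H_{\bZ}$; the exact couple of that tower is the AHss. Your slice-theoretic construction produces the same spectral sequence (for $MGL$-modules the two towers coincide, precisely because of Hopkins--Morel--Hoyois), at the cost of heavier input --- the Levine/Spitzweck identification $s_q(MGL)\cong \bT^q\wedge(H_{\bZ}\otimes MU^{-2q})$ and the compatibility of the quotient construction (2.1) with the slice filtration --- but with the benefit that convergence becomes a quotable theorem of Levine rather than an ad hoc argument. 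The essential content (Hopkins--Morel, identification of the layers, convergence over a field of characteristic $0$) is the same in both.

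Two points in your write-up need repair. First, ``$\Sigma^\infty X_+$ is compact, so mapping out commutes with the homotopy limit'' is a non sequitur: compactness governs homotopy colimits, while mapping out of a homotopy limit always yields a Milnor $\varprojlim^1$ sequence; the genuine issue is completeness and exhaustiveness of the induced filtration on $Ah^{*,*'}(X)$, which is exactly the convergence statement you invoke afterwards (or, for smooth $X$, the vanishing (2.3), which truncates the relevant filtration). Delete the compactness remark and lean on that. Second, the weight component of $d_{2r+1}$ cannot be ``checked against the topological AHss via $t_{\bC}$'', since realization forgets the weight; it has to be read off from the $\bT$-twists in the motivic tower. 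Doing so gives $d_{2r+1}\colon E^{(*,*',2*'')}\to E^{(*+2r+1,\,*'+r,\,2*''-2r)}$, which is what is consistent with $w(d_r)=-1$ in $\S 4$, with the description $d_{2p^n-1}=v_n\otimes Q_n$ (where $Q_n$ raises the weight by $p^n-1$), and with preservation of the second total degree of the abutment $Ah^{*+2*'',*'+*''}(X)$. The shift $*'-r$ you reproduced from the statement appears to be a typo, and the verification you propose would not have detected it.
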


Note that $E_2^{*,*',2*''}$ 
%\not \cong  
% H^{*,*'}(X;Ah^{2*'',*'''}(pt))$
  $\cong  H^{*,*'}(X.Ah^{2*'',*''}(pt)).$
The coefficient part is the only  difference place  from the usual AHss.

Note that  the cohomology $H^{m,n}(X,h^{2n'})$ here is the usual
motivic cohomology {with (constant) coefficients in} the abelian group $h^{2n'}$.   

{\bf Remark.}
We do $not$ asume the existence of the natural map $AE_r \to E_r$ of spectral sequences for algebraic to topological.
(while there is a map $Ah^{*,*'}(X)\to h^*(X)$)).

Here we recall some important properties
of the motivic cohomology. When $X$ is smooth, we
know the $(p-localized)$ Chow ring is  
\[(2.2) \ \  CH^*(X)\cong H^{2*,*}(X;\bZ_{(p)}),\quad \]
\[ (2.3)\ \ H^{*,*'}(X;\bZ_{(p)})\cong 0\ for \  2*'<*.\]
Hence  if $X$ is smooth, then
$E_r^{m,n,2n'}\cong 0$ for $m>2n.$
%The convergence in AHss means that
%there is the filtration by the third degree (by $v_i's$)
%\[Ah^{*,*'}(X)=F_0^{*,*'} \supset F_1^{*,*'}\supset F_2^{*,*'}\supset
%...\]
%such that $F_i^{*,*'}/F_{i+1}^{*,*'}
%\cong E_{\infty}^{*+2i,*'+i,-2i}.$

 Let $S\subset R=(v_{j_1},...)$.  Then
the  induced map $ABP(S)\to ABP(R)$ of spectra induces the 
$BP^*$-module map of AHss
:  $E(ABP(S))_r^{*,*',*''} \to E(ABP(R))_r^{*,*',*''}.$
In general, $ABP(S)^{*,*'}(X)\not \cong ABP^{*,*'}(X)/(S).$ 
However, from the above maps and dimensional reason 
(2.3)
with the differential (see also $\S 4$  below)
\[ d_{2r+1}:E_{2r+1}^{(2*,*,0)} \to 
E_{2r+1}^{(2*+2r+1,*-r,-2r)}=0, \]
 we see that all elements in 
$ E_2^{2*,*,0}
\cong H^{2*,*}(X)\otimes 1\cong 
 CH^*(X)$
 are infinite  cycles $d_r=0$ for all $r>0$.  %They are permanent cycles also because the third degree must be positive  for 
%a nonzero  image of
%$d_r$ by the definition.
Hence, for a smooth $X$ (taking $S=\emptyset, R=S$) we have
the surjection and isomorphisms 
%\[ (2.4)\quad ABP(S)^{2*,*}(X)\cong  BP^{2*}/(S)\cdot CH^*(X),\]
\[ (2.4)\quad BP^{2*}/(S)\otimes  CH^*(X) \twoheadrightarrow ABP(S)^{2*,*}(X)
\]
\[(2.5)\quad ABP(S)^{2*,*}(X)\otimes _{BP^*}\bZ_{(p)}\cong H^{2*,*}(X)\cong CH^*(X).\]

In this paper,  
a $connective $  $oriented $  theory $h^{2*}(X)$
means $ABP(S)^{2*,*}(X)$ as above.
We mainly consider 
the connective  oriented theory $ABP^{2*,*}(X)$.
Hereafter, we write it simply 
\[ \Omega^{*} (X)=ABP^{2*,*}(X)
\cong  MGL^{2*,*}(X)\otimes_{MU^*}BP^*.\]
Hence from (2.5), \ $\Omega^*(X)\otimes _{\Omega^*}\bZ_{(p)}\cong
CH^*(X)$ for a smooth $X$.

%We consider another examples. 
%Let $I_n=(p,v_1.,,,.v_{n-1})$ be the (prime invariant) ideal
%of $BP^*$.  Let us write
%\[ P(n)=BP(I_n)\cong BP(p,v_1,...,v_{n-1})\]
%\[
% \quad P(n)^*\cong  BP^*/I_n \cong 
%\bZ/p[v_n,v_{n+1},...], \]
%\[ k(n)=(p,... \hat{v}_n.,,,)\quad k(n)^*\cong \bZ/p[v_n].\]
%The following $K(n)^*(X)$ is called the usual Morava $K$-theory.
%\[ K(n)^*(X)\cong k(n)^*(X)[v_n^{-1}] .\]

\section{algebraic Morava $K$ theories}

 By the arguments in previous section we can 
define the algebraic versions
$Ak(n)^{*.*'}(X),\  AK(n)^{*,*'}(X)$ and $ AP(n)^{*,*'}(X)$.

Recall the invariant ideal
$ I_n=(p,v_1,....,v_{n-1})\subset BP^*$.
Define  $P(n)=BP(I_n)$ so that
$P(n)^*=BP^*/(I_n)\cong \bZ/p[v_n,v_{n+1},...].$
(Since $I_n$ is invariant ideal, $P(n)^*(X)$ has the
Landweber-Novikov operations, but $K(n)^*(X)$ below  does 
not.)
 
The usual Morava $K$-theory  is defined from
the following (Conner-Floyd type) formula
\[(3.1)\quad K(n)^*(X)=K(n)^*\otimes_{P(n)^*}P(n)^*(X).\]
so that $K(n)^*=\bZ/p[v_n,v_n^{-1}]$.
(We also know  $K(n)^*(X)\cong k(n)^*(X)[v_n^{-1}]$, 
but most cases,  $k(n)^*(X)$ seems not easier 
 to compute than $P(n)^*(X)$.)
Remark that  for general, 
\[ K(n)^*(X)\not \cong K(n)^*\otimes_{ P(s)^*}{P(s)^*}(X)\quad for \  s\not =n.\]

We will study the algebraic version of these theorems.

Levine-Morel  define algebraic cobordism $\Omega^*(X)$
as the universal theory of the (some oriented) theories having the 
formal group laws.  In particular, Levine (essentially) shows
\[\Omega^*(X)\cong ABP^{2*.*}(X) \quad when \ \ X\ smooth.\]

Moreover, Sechin, Vishik and others define the oriented theories
\[ (3,2)\quad  OBP(S)^*(X)=\Omega^*(X)/(S),\]
\[ e.g. \ OP(n)^*(X)=\Omega^*(X)/I_n, \quad
OK(n)^*(X)=\Omega^*(X)\otimes_{BP^*}K(n)^*\]
 These  oriented theories are not represented by the motivic 
cohomology theories in general. But
we have the following lemma.

\begin{lemma}
Let $X$ be snooth.
 Then we have the ring isomorphism
\[ OP(n)^{2*}(X)=\Omega^*(X)/I_n
\cong AP(n)^{2*,*}(X).\]
\end{lemma}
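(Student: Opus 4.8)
The plan is an induction on $n$, reducing the statement to the one-step assertion that for each $k\geq 1$ there is a ring isomorphism
\[
ABP(I_k)^{2*,*}(X)\ \cong\ ABP(I_{k-1})^{2*,*}(X)/(v_{k-1}),
\]
with the conventions $v_0=p$, $I_0=(0)$, so that $ABP(I_0)=ABP$ and $ABP(I_n)=AP(n)$. Chaining these isomorphisms for $k=1,\dots,n$ gives $AP(n)^{2*,*}(X)\cong\Omega^*(X)/(p,v_1,\dots,v_{n-1})=\Omega^*(X)/I_n$, which is the assertion, the identity $OP(n)^{2*}(X)=\Omega^*(X)/I_n$ being the definition (3.2). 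Since each structure map $ABP(I_{k-1})\to ABP(I_k)$ is a morphism of $AMU$-module (ring) spectra and kills $v_{k-1}$ on coefficients, the induced bijection is automatically one of rings.

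For the one-step assertion I would take the defining cofibre sequence (2.1) with $s=v_{k-1}$,
\[
\bT^{\,p^{k-1}-1}\wedge ABP(I_{k-1})\ \xrightarrow{\ \times v_{k-1}\ }\ ABP(I_{k-1})\ \longrightarrow\ ABP(I_k),
\]
and pass to its long exact sequence in $(-)^{*,*'}(X)$, restricted to the diagonal bidegrees $(2m,m)$; this restriction is legitimate since $v_{k-1}$ has bidegree $(-2d,-d)$ with $d=p^{k-1}-1$, so multiplication by it preserves the diagonal. Writing $h_j=ABP(I_j)$ the sequence reads
\[
\cdots\to h_{k-1}^{2(m+d),\,m+d}(X)\ \xrightarrow{\ \cdot v_{k-1}\ }\ h_{k-1}^{2m,m}(X)\ \to\ h_k^{2m,m}(X)\ \xrightarrow{\ \partial\ }\ h_{k-1}^{2(m+d)+1,\,m+d}(X)\to\cdots,
\]
so the one-step assertion is equivalent to the vanishing of the target of $\partial$: if $h_{k-1}^{2\ell+1,\ell}(X)=0$ for all $\ell$, then $h_k^{2m,m}(X)=\operatorname{coker}(\cdot v_{k-1})=h_{k-1}^{2m,m}(X)/v_{k-1}h_{k-1}^{2(m+d),m+d}(X)$, which is $h_{k-1}^{2*,*}(X)/(v_{k-1})$ degree by degree, with the quotient map the canonical ring map $h_{k-1}\to h_k$.

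The real content is therefore the vanishing $ABP(I_{k-1})^{2\ell+1,\ell}(X)=0$ for smooth $X$, and this is the step where I expect the work to concentrate and where smoothness is indispensable. I would deduce it from the Atiyah--Hirzebruch spectral sequence of Theorem 2.1 for $ABP(I_{k-1})$ together with the dimensional vanishing $E_r^{m',n',2n''}=0$ for $m'>2n'$ recorded in \S2 as a consequence of (2.2)--(2.3): any class contributing to $ABP(I_{k-1})^{2\ell+1,\ell}(X)$ comes from some $E_\infty^{m',n',2n''}$ with $m'+2n''=2\ell+1$ and $n'+n''=\ell$, forcing $m'=2n'+1>2n'$, whence $E_\infty^{m',n',2n''}\subseteq E_2^{m',n',2n''}=0$; since the filtration is finite in each bidegree for smooth $X$, the group vanishes. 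Two points require care: one must keep track of the shift so that the obstruction is the $(2\ell+1,\ell)$-group and not the $(2\ell-1,\ell)$-group, which does \emph{not} vanish in general; and the first step ($k=1$, coning off $p$) is merely the $d=0$ case of the above and needs no separate argument. This also explains the role of smoothness: for non-smooth $X$ the group $ABP(I_{k-1})^{2\ell+1,\ell}(X)$ can be nonzero and $\partial$ nontrivial, consistent with the general failure of $ABP(S)^{*,*'}(X)\cong ABP^{*,*'}(X)/(S)$ noted in \S2.
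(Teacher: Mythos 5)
Your proof is correct and follows essentially the same route as the paper: induction on $n$ via the long exact sequence of the cofibration (2.1), with the key point being the vanishing of the odd diagonal groups $AP(k-1)^{2\ell+1,\ell}(X)$ for smooth $X$, which the paper dispatches as ``dimensional reason (2.3)'' and you justify in more detail through the Atiyah--Hirzebruch spectral sequence of Theorem 2.1. The only caveat is your assertion that the structure maps $ABP(I_{k-1})\to ABP(I_k)$ are maps of ring spectra, which is stronger than what the paper itself claims to know (see the Remark following the lemma on the ring structure of $ABP(S)^{*,*'}(X)$); but the paper's own proof is equally silent on why the isomorphism is multiplicative, so this is not a defect specific to your argument.
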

\begin{proof}  
From (2.1) in the preceding section, we consider  the exact sequence for the motivic  cohomology
\[  AP(n)^{2*'-1,*'}(X)\stackrel{\delta}{\to}  AP(n-1)^{2*',*'}(X)\stackrel{v_{n-1}}{\to}
     AP(n-1)^{2*,*}(X) \to\] \[
AP(n)^{2*,*}(X)
\stackrel{\delta}{\to}AP(n-1)^{2*+1,*}(X)=0 \]
The last term in the above sequence follows from the dimensional reason (2.3).
  By induction,
we have
\[ AP(n)^{2*,*}(X)\cong AP(n-1)^{2*,*}(X)/(v_{n-1})\]
\[\cong OP(n-1)^{*}(X)/(v_{n-1})=
OP(n)^{*}(X). \]
\end{proof}

{\bf Remark.}  It seems unknown that $ABP(S)^{*,*'}(X)$ has a good ring structure for $S\not=\emptyset$. However $AP(n)^{*,*'}(X))$
has when $k=\bC$ (Theorem 7.4 in [Ya7]).

For other regular sequence
$S$ in  $\S 2$ in (2.1), by the same arguments, we have
\begin{lemma}
Let $X$ be smooth.
 Then we have a 
$BP^*$-module  isomorrphism
\[ OBP(S)^{2*}(X)=\Omega^*(X)/(S)
\cong ABP(S)^{2*,*}(X).\]
\end{lemma}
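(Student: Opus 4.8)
The plan is to imitate the proof of Lemma~3.3 (the $P(n)$ case), replacing the single element $v_{n-1}$ by the full regular sequence $S=(s_1,s_2,\dots)$ and running an induction along the tower of module spectra $ABP(S_0)=ABP, ABP(S_1), ABP(S_2),\dots$ built from the cofiber sequences (2.1). The base case is Levine's identification $\Omega^*(X)=ABP^{2*,*}(X)$ together with (2.3). For the inductive step I would fix $i$ and apply the long exact sequence in motivic cohomology associated to the cofibering
\[
\bT^{-1/2|s_i|}\wedge ABP(S_{i-1})\stackrel{\times s_i}{\longrightarrow} ABP(S_{i-1})\longrightarrow ABP(S_i),
\]
evaluated in the slot $(2*,*)$. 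Because the Tate twist shifts the bidegree by $(|s_i|,\tfrac12|s_i|)$ and $|s_i|$ is even and negative, the connecting map lands in $ABP(S_{i-1})^{2*+1,*}(X)$, which vanishes for smooth $X$ by the dimensional vanishing (2.3) exactly as in the displayed sequence in the proof of Lemma~3.3. Hence the sequence collapses to
\[
ABP(S_{i-1})^{2*,*}(X)\stackrel{\times s_i}{\longrightarrow} ABP(S_{i-1})^{2*,*}(X)\longrightarrow ABP(S_i)^{2*,*}(X)\longrightarrow 0,
\]
giving $ABP(S_i)^{2*,*}(X)\cong ABP(S_{i-1})^{2*,*}(X)/(s_i)$.

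Chaining these isomorphisms and feeding in the inductive hypothesis $ABP(S_{i-1})^{2*,*}(X)\cong\Omega^*(X)/(s_1,\dots,s_{i-1})$ yields $ABP(S_i)^{2*,*}(X)\cong\Omega^*(X)/(s_1,\dots,s_i)$. For an infinite sequence $S$ one then passes to the (co)limit: in each fixed total degree only finitely many $s_j$ can act nontrivially for dimensional reasons, so $ABP(S)^{2*,*}(X)=\operatorname*{colim}_i ABP(S_i)^{2*,*}(X)$ and the quotient stabilizes degreewise, giving $ABP(S)^{2*,*}(X)\cong\Omega^*(X)/(S)=OBP(S)^{2*}(X)$ as $BP^*$-modules. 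Since we only claim a $BP^*$-module isomorphism (not a ring isomorphism), there is no need to worry about multiplicativity of the spectra $ABP(S)$, which — as the Remark after Lemma~3.3 warns — is not known in general.

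The one genuinely delicate point is the vanishing of the connecting homomorphism: it relies on $\tfrac12|s_i|$ being a non-positive integer so that the Tate-twisted term sits in motivic bidegree $(2*+|s_i|,*+\tfrac12|s_i|)$ and the boundary map targets a group $ABP(S_{i-1})^{2*+1,*}(X)$ to which (2.3) applies. For the $v_i$'s this is automatic since $|v_i|=-2(p^i-1)$; for a general regular sequence $S$ one must record the standing hypothesis (already implicit in (2.1)) that each $s_i\in MU^{2m_i}_{(p)}$ has even negative degree, and that the quotients $BP^*/(s_1,\dots,s_{i-1})$ remain nicely graded. Granting that, the argument is purely formal, parallel word-for-word to the proof of Lemma~3.3, and I would present it as such, noting that the induction is ``the same arguments'' and only spelling out the bidegree bookkeeping in the cofiber sequence and the appeal to (2.3).
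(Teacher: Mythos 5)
Your proposal is correct and follows essentially the same route as the paper: the paper proves the $P(n)$ case (its Lemma~3.1) by exactly this induction over the cofiber sequences (2.1), using the vanishing $ABP(S_{i-1})^{2*+1,*}(X)=0$ from (2.3) to kill the connecting map, and then states the general-$S$ lemma with the words ``by the same arguments.'' Your write-up just makes explicit the bidegree bookkeeping and the degreewise stabilization for infinite $S$, which the paper leaves implicit.
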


Next we recall the $truncated$ $BP$-theory.
Suppose that we fix generators $v_1,v_2,...$, (and some finiteness conditions,)
let us write
$ J_{n+1}=(v_{n+1},v_{n+2},...)\subset BP^* $
for the fixed generators (note $J_n$ is not
an invariant (under the Landweber-Novikov operations) ideal.
Define the truncated $BP$-theory
\[ BP\la n\ra^*(X)=BP(J_{n+1})^*(X) \]
so that 
$BP\la n\ra ^*\cong BP^*/J_{n*1}\cong 
\bZ_{(p)}[v_1,...,v_{n}] .$
Similarly, we can define
$ABP\la n\ra ^{*,*'}(X)$.  Moreover we have
from the preceding lemma 
\begin{lemma}
Let $X$ be snooth.
 Then we have $BP^*$-module   isomorphisms
\[ OBP\la n \ra ^{2*}(X)=\Omega^*(X)/J_{n+1}
\cong ABP\la n\ra^{2*,*}(X),\]
\[ Ok(n)^{2*}(X)=\Omega^*(X)/(I_n,J_{n+1})
\cong Ak(n)^{2*,*}(X).\]
\end{lemma}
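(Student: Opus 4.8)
The two equalities in the statement are just the definitions of $OBP\la n\ra$ and $Ok(n)$ provided by (3.2), so the content is the two displayed isomorphisms, and the plan is to reduce them to Lemma 3.3 together with one further run of the cofibre argument used for Lemma 3.2. The first isomorphism is immediate from Lemma 3.3 applied to the regular sequence $S=J_{n+1}=(v_{n+1},v_{n+2},\dots)$, since $ABP(J_{n+1})=ABP\la n\ra$ by definition. The one point worth a remark is that $J_{n+1}$ is infinite: for a fixed smooth $X$ formula (2.4) forces $ABP(S)^{2a,a}(X)=0$ as soon as $a>\dim X$, so in each fixed degree $*$ the multiplication by $v_{n+j}$ comes from the group $ABP(S)^{2(*+p^{n+j}-1),\,*+p^{n+j}-1}(X)$, which is zero for $j$ large; hence the colimit defining $ABP\la n\ra$ stabilizes degreewise (this is the finiteness condition alluded to when $BP\la n\ra$ was introduced) and the argument of Lemma 3.3 goes through term by term.

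For the second isomorphism, recall $k(n)^*=BP^*/(I_n,J_{n+1})\cong\bZ/p[v_n]$ and $Ak(n)=ABP(I_n\cup J_{n+1})$. I would kill $J_{n+1}$ first, using the isomorphism just obtained to replace $\Om(X)=ABP^{2*,*}(X)$ by $ABP\la n\ra^{2*,*}(X)$, and then kill $I_n=(p,v_1,\dots,v_{n-1})$ inside $ABP\la n\ra$ by rerunning the proof of Lemma 3.2 with $ABP\la n\ra$ in place of $ABP$. Every spectrum obtained from $ABP\la n\ra$ by killing some of $p,v_1,\dots$ through the cofibrings (2.1) is again one of the $ABP(S)$-theories, so for smooth $X$ the degeneration of its motivic AHss coming from (2.3) gives $ABP(S)^{2*+1,*}(X)=0$. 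Feeding the bidegree $(2*,*)$ into the long exact sequence of (2.1) for multiplication by the next generator $s$,
\[ ABP(S)^{2*-|s|,\,*-|s|/2}(X)\xrightarrow{\ \times s\ }ABP(S)^{2*,*}(X)\to ABP(S\cup\{s\})^{2*,*}(X)\xrightarrow{\ \delta\ }ABP(S)^{2*+1-|s|,\,*-|s|/2}(X), \]
the group on the right has first index exceeding twice its second, hence vanishes; so the middle map is onto with kernel the image of $\times s$, i.e. $ABP(S\cup\{s\})^{2*,*}(X)\cong ABP(S)^{2*,*}(X)/(s)$. Iterating over $s=p,v_1,\dots,v_{n-1}$ gives $Ak(n)^{2*,*}(X)\cong ABP\la n\ra^{2*,*}(X)/I_n$, which combined with the first step and the identity $(\Om(X)/J_{n+1})/I_n=\Om(X)/(I_n,J_{n+1})$ for iterated module quotients is the claim. (If one reads Lemma 3.3 for arbitrary regular sequences $s_i\in MU^*_{(p)}$, the second isomorphism is literally Lemma 3.3 for $S=(p,v_1,\dots,v_{n-1},v_{n+1},v_{n+2},\dots)$.)

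The one genuinely substantive step, exactly as in Lemma 3.2, is the vanishing of the connecting maps $\delta$: that is what guarantees that no extra relation appears on the algebraic side, so that $\times s$ has only the obvious kernel and cokernel the honest module quotient. It is precisely here that smoothness of $X$ is used, through (2.3)--(2.4) (the collapse of the motivic AHss above the diagonal); for non-smooth $X$ the $\delta$'s need not die and the conclusion can fail. A minor additional check is that the construction is insensitive to the order in which the generators of $(I_n,J_{n+1})$ are killed: on the $BP^*$-module side only the ideal matters, and on the spectrum side every intermediate theory remains of type $ABP(S)$, so the hypothesis of the inductive step is preserved whichever generator is removed next.
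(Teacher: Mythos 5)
Your proof is correct and follows essentially the same route as the paper, which itself offers no separate argument for this lemma beyond ``from the preceding lemma'': one applies the cofibre sequences (2.1) generator by generator and kills the connecting maps $\delta$ in bidegree $(2*,*)$ using the vanishing (2.3) for smooth $X$, exactly as in the proofs of Lemmas 3.2 and 3.3. Your additional observation that the infinite regular sequence $J_{n+1}$ causes no trouble because (2.4) forces $ABP(S)^{2a,a}(X)=0$ for $a>\dim X$, so the construction stabilizes in each fixed degree, is a detail the paper leaves implicit and is a welcome clarification rather than a deviation.
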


{\bf Remark.}
  
The truncated theory 
$ABP\la n\ra ^{*,*'}(X)$
 is decided by the choice of 
%$v_i\in J_{n+1}$ but　is not
%
$v_i\in J_{n+1}$ but not
decided  by $n$. Hence when we consider
$k(n)^*(X)$, we fix some typical generators of $BP^*$.

\section{Milnor operations and weight degree }

Voevodsky  showed  that there exists the Milnor operation $Q_i$
in the motivic cohomology $H^{*,*'}(X;\bZ/p)$ \ ([Vo1,3]).  For an object $\chi$ in $\bA^1$ homotopy category, we have 
\[ Q_i: H^{*,*'}(\chi;\bZ/p)\to H^{*+2p^i-1,*'+p^i-1}(\chi;\bZ/p)\]
which is compatible with the usual Milnor operation
on $H^*(t_{\bC}(\chi);\bZ/p)$ for the realization map $t_{\bC}$.
(Here the topological operation is defined $Q_0=\beta$
Bockstein operation and $Q_{i+1}=Q_i{P^{p^i}}-P^{p^{i}}Q_i$.)
This operation $Q_i$ can be extended on $H^{*,*'}(M;\bZ/p)$
for a motive $M$ in $M(X)$ (Lemma 7.1 in  \cite{YaB}).

The relation $Q_n$ to the exact sequence (2.1) for $k(n)^*(X)$ is giving
\[(4.1)\quad k(n)^*(X)\stackrel{v_n}{\to} 
k(n)^*(X)\stackrel{r}{\to} 
H^*(X;\bZ/p)\stackrel{\delta}{\to}
k(n)^*(X)\to... \]
Here we can see  $Q_n=r\delta$.

For $0\not =x\in H^{*,*'}(X;\bZ/p)$ (or cohomology operation),
let us write $*=|x|, w(x)=2*'-*, d(x)=*-*'$ so that $0\le d(x)\le dim(X)$
and $w(x)\ge 0$ when $X$ is smooth.  We also note
\[ |\tau|=0,\  w(\tau)=2,\quad |Q_i|=2p^i-1,\  w(Q_i)=-1\]
here $0\not =\tau\in H^{0,1}(Spec k);\bZ/p)$.

For the differential  $d_r$ in AHss, we have  
$w(d_r)=-1$.  Then the arguments for (2.4-2.5) are immediate.
In fact some differencial $d_r$ is represented as $v_n\otimes Q_n$.

We also note that (for $h^*(-)$ in $\S 2
$) 
\[ (4.2)\quad Ah^{2*.*}(X)\cong Ah^{*',*''}(X)/\{
	x\in  Ah^{*',*''}(X)|w(x)\ge1\} .\] 
.

\section{Examples ;  classifying spaces $X=BG$.}

We give here the most simple but a  non-trivial example, ([Ya5,8])
for Lemma 3.1.
 Let  $k=\bar k\subset\bC$, $X=BG$ and 
$G=\bZ/p $.  Hence
\[ H^{*,*'}(Spec( k);\bZ/p)\cong \bZ/p[\tau ].\]
 
At first, we recall the topological cases. Then (for $p$ odd)
\[ H^*(BG)\cong \bZ[y]/(py),\quad 
H^*(BG;\bZ/p)\cong \bZ/p[y]\otimes \Lambda(x),\]
where $|x|=1,\ |y|=2$ and $x^2=0$.
(For $p=2$, $x^2=y$.)

By the AHss, we have 
\[  E_2^{*,*'}\cong H^*(BG)\otimes BP^{*'}\cong BP^*[y]/(py) 
\Longrightarrow BP^*(X).\]
The $E_2$-term 
is generated by even degrees, and hence generates the $E_{\infty}$-term,
That is (the graded ring) $grBP^*(BG)\cong BP^*[y]/(py)$. 

 More precisely, we can see
\[
BP^*(BG)\cong BP^*[y]/([p](y)) \quad |y|=2. \]
(For easy of notations we simply write by $BP^*[y]$ the formal power series   $BP^*[[y]]$ in this paper.)
Here $[p](y)$ the $p$-th product of 
$BP^*$-formal group
law, in particular
\[[p](y)=py+v_1y^p+v_2y^{p^2}+... \ \ mod(I_{\infty}^2).\]	
Since $y=c_1$ the first Chern class, we have the surjection
\[ \Omega^*(X) \twoheadrightarrow
BP^*(BG)\cong BP^*[y]/([p]y).\]

 Next we consider the case $P(s)$ for $s\ge 1$.  The AHss is written
\[ E_2^{*,*'}\cong H^*(BG;\bZ/p)\otimes  P(s)^{*'}\Longrightarrow P(s)^*(BG).\]

It is known the first non-zero differencial is
\[ d_{2p^s-1}(x)=v_sQ_s(x)=v_sy^{p^s}.\]
Hence  we see \ 
\[grP(s)^*(X)\cong  E_{2p^s}\cong P(s)^*[y]/(v_sy^{p^s}). \]
In fact, the right hand side  
 is generated by even degree elements, and it is 
isomorphic to the infinite term.
In particular, $P(n)^{odd}(X)=0$.

The motivic   $ (k=\bar k) $  cohomology is written as
\[ H^{*,*'}(BG:\bZ/p)\cong H^*(BG;\bZ/p)\otimes 
\bZ/p[\tau],\quad \tau\in H^{0,1}(pt.;\bZ/p).\]
Here for $p=2$, $x^2=y\tau$ [Vo1-2].
 We can get the same result for the algebraic case
$AP(n)^{odd,*'}(BG)=0$. We consider the exact sequence
\[0= AP(1)^{2*-1,*}(X)\to ABP^{2*,*}(X)\stackrel{p}{\to} ABP^{2*,*}(X) \] \[ \to
AP(1)^{2*,*}(X)\to ABP^{2*+1,*}(X)=0 .\]

From the result of $AP(1)^{*,*'}(X)$ and the fact 
$\Omega^*(X)$ has no $p$-divisible element,  we can see
$\Omega^*(X)/p\cong AP(1)^{2*.*}(X).$
Thus we have 
\[\Omega^*(X)\cong BP^*(X)\cong BP^*[y]/([p](y)).\]

Hence we can apply
the main lemma.
\begin{lemma}
When $X=BG,\ G=\bZ/p$, we have
\[ OP(n)^*(X)=\Omega^* /I_n \cong AP(n)^{2*,*}(X).\]
\end{lemma}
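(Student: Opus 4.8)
If $BG$ were a smooth variety the statement would be an immediate case of Lemma~3.1; the point is that $BG$ is not smooth but the inductive limit $BG=\mathrm{colim}_j\,U_j/G$ of smooth quotients $U_j/G$ (Totaro, Morel--Voevodsky), so the plan is to apply Lemma~3.1 levelwise and pass to the limit. Each $U_j/G$ is smooth, hence Lemma~3.1 gives ring isomorphisms $\Omega^*(U_j/G)/I_n\cong AP(n)^{2*,*}(U_j/G)$ natural in $j$; taking the limit over $j$ then gives, provided the relevant $\lim^1_j$ terms vanish,
\[ \Omega^*(BG)/I_n=\lim_j\bigl(\Omega^*(U_j/G)/I_n\bigr)\cong\lim_j AP(n)^{2*,*}(U_j/G)=AP(n)^{2*,*}(BG). \]

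To justify the $\lim^1$ vanishing I would invoke the Milnor exact sequence relating $ABP(S)^{*,*'}(BG)$ to the tower $\{ABP(S)^{*,*'}(U_j/G)\}_j$, together with the stabilization in each bidegree of the AHss $E_2$-terms, which are built from $H^{*,*'}(BG;\mathbb Z/p)\cong\mathbb Z/p[y]\otimes\Lambda(x)\otimes\mathbb Z/p[\tau]$ and its integral-coefficient analogue and are therefore finitely generated in each multidegree; since the tower $\{U_j/G\}_j$ satisfies Mittag--Leffler, there is no $\lim^1$ contribution and $ABP(S)^{m,n}(BG)=ABP(S)^{m,n}(U_j/G)$ for $j\gg 0$, and likewise for $AP(n)$, and the quotient by $I_n$ commutes with these limits for the same reason. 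Since the levelwise isomorphisms of Lemma~3.1 are ring maps, the limiting map is a ring isomorphism, which is the assertion.

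As a cross-check and to exhibit the answer, I would also re-run the argument of Lemma~3.1 directly on $X=BG$: the cofiber sequences $(2.1)$ give long exact sequences (continuity of $ABP(S)^{*,*'}$ as above), and since $AP(m)^{odd,*'}(BG)=0$ — shown earlier through the collapse of its AHss after the Milnor differential $d_{2p^{m}-1}=v_m\otimes Q_m$ with $Q_m(x)=y^{p^m}$ — induction on $n$ yields $AP(n)^{2*,*}(BG)\cong AP(n-1)^{2*,*}(BG)/(v_{n-1})$, hence $AP(n)^{2*,*}(BG)\cong\Omega^*(BG)/I_n$ from the base case $AP(1)^{2*,*}(BG)\cong\Omega^*(BG)/p$ (which used that $\Omega^*(BG)=BP^*[y]/([p](y))$ has no $p$-torsion). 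Explicitly both sides are $P(n)^*[y]/([p]_{P(n)}(y))$ with $y=c_1$ the first Chern class, the $p$-series of the induced formal group law having leading term $v_ny^{p^n}$, matching the graded computation $grP(n)^*(BG)\cong P(n)^*[y]/(v_ny^{p^n})$. The only genuine obstacle is the continuity/$\lim^1$ bookkeeping for the ind-scheme $BG$; granted that, both the limit argument and the direct induction are routine.
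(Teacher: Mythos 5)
Your proposal is correct, and its second half is essentially the paper's own argument: the paper computes the AHss for $AP(s)^{*,*'}(BG)$ using the differential $d_{2p^s-1}=v_s\otimes Q_s$ with $Q_s(x)=y^{p^s}$, deduces the odd-degree vanishing $AP(n)^{odd,*'}(BG)=0$, feeds this into the long exact sequences coming from $(2.1)$ to get $\Omega^*(BG)/p\cong AP(1)^{2*,*}(BG)$ and hence $\Omega^*(BG)\cong BP^*[y]/([p](y))$, and then declares that ``the main lemma'' (the inductive argument of Lemma 3.1, now legitimate because the required odd-degree vanishing has been verified directly) applies to give $\Omega^*(BG)/I_n\cong AP(n)^{2*,*}(BG)$. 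What you add beyond the paper is the first paragraph: an explicit reduction to the smooth case via $BG=\mathrm{colim}_j\,U_j/G$ with a Mittag--Leffler/$\lim^1$ discussion. The paper never addresses the fact that $BG$ fails the smoothness hypothesis of Lemma 3.1, so your limit argument is a genuine (and welcome) patch rather than a redundancy; its cost is the bookkeeping that quotienting by $I_n$ commutes with the inverse limit, which you correctly reduce to degreewise stabilization of the tower. Either route alone suffices, and together they reinforce each other; your explicit identification of both sides with $P(n)^*[y]/([p]_{P(n)}(y))$ agrees with the paper's formula $P(n)^{2*}[y]/(v_ny^{p^n}+v_{n+1}y^{p^{n+1}}+\cdots)$.
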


Thus,  we can write  \[ AP(n)^{2*,*}(BG)\cong \Omega^*(BG)/I_n\cong
BP^*[y]/([p](y),I_n).\]
\[\cong \bZ/p[v_n,v_{n+1},...][y]/(v_ny^{p^n}+v_{n+1}y^{p^{n+1}}+...) \] 
\[
\cong P(n)^{2*}[y]/(v_ny^{p^n}+v_{n+1}y^{p^{n+1}}+...),\]
which has no $v_n$-torsion.
\begin{cor}        Let X=$BG$ for $G=\bZ/p$. Then
\[ AK(n)^{2*,*}(BG)\cong
 \Omega^*(BG)\otimes _{BP^*}
K(n)^* \] \[ \cong
K(n)^*[y]/(v_ny^{p^n})\cong K(n)^*[y]/(y^{p^n}).\]
\end{cor}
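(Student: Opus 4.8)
The goal is to compute $AK(n)^{2*,*}(BG)$ for $G=\bZ/p$, starting from the formula for $AP(n)^{2*,*}(BG)$ established just above (Lemma 5.1), namely
\[
AP(n)^{2*,*}(BG)\cong P(n)^{2*}[y]/(v_ny^{p^n}+v_{n+1}y^{p^{n+1}}+\cdots).
\]
My plan is to simply tensor this presentation up along $P(n)^*\to K(n)^*$, since by definition
\[
AK(n)^{2*,*}(X)=K(n)^*\otimes_{BP^*}AP(n)^{2*,*}(X)\cong K(n)^*\otimes_{P(n)^*}AP(n)^{2*,*}(X),
\]
the latter equality because $K(n)^*=K(n)^*\otimes_{BP^*}P(n)^*$ (one inverts $v_n$ and kills $v_{n+1},v_{n+2},\dots$ already on the level of $BP^*\to P(n)^*\to K(n)^*$). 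Equivalently, by Lemma 3.1 and Lemma 1.1, $AK(n)^{2*,*}(BG)\cong\Omega^*(BG)\otimes_{BP^*}K(n)^*$, and then I substitute the known description $\Omega^*(BG)\cong BP^*[y]/([p](y))$.

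First I would write $AK(n)^{2*,*}(BG)\cong K(n)^*\otimes_{P(n)^*}P(n)^{2*}[y]/(v_ny^{p^n}+v_{n+1}y^{p^{n+1}}+\cdots)$. Since in $K(n)^*$ we have $v_{n+1}=v_{n+2}=\cdots=0$, the relation collapses to $v_ny^{p^n}$, giving
\[
AK(n)^{2*,*}(BG)\cong K(n)^*[y]/(v_ny^{p^n}).
\]
Then, because $v_n$ is a unit in $K(n)^*=\bZ/p[v_n,v_n^{-1}]$, the ideal $(v_ny^{p^n})$ equals $(y^{p^n})$, so this is $K(n)^*[y]/(y^{p^n})$. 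I should be slightly careful about the power-series versus polynomial convention ($[y]$ meaning $[[y]]$): the relation $v_ny^{p^n}+v_{n+1}y^{p^{n+1}}+\cdots$ is a priori an infinite series, but after killing $v_{n+1},v_{n+2},\dots$ only the single monomial $v_ny^{p^n}$ survives, so the quotient is genuinely $K(n)^*[[y]]/(y^{p^n})$, which being a quotient by a power of the maximal ideal coincides with the polynomial truncation $K(n)^*[y]/(y^{p^n})$ — no convergence subtlety remains.

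The one point that needs a word of justification is the base-change identity $K(n)^*\otimes_{BP^*}AP(n)^{2*,*}(X)\cong K(n)^*\otimes_{P(n)^*}AP(n)^{2*,*}(X)$; this is exactly the Conner--Floyd-type passage (3.1) transported to the algebraic setting, and it holds because $AP(n)^{2*,*}(X)$ is a $P(n)^*$-module (it is $ABP^{2*,*}(X)/I_n$ by Lemma 3.1) and $K(n)^*$ is obtained from $BP^*$ by factoring through $P(n)^*$. Given Lemma 5.1 there is essentially no obstacle here — the computation is a direct specialization — so the ``hard part,'' such as it is, is only the bookkeeping check that the infinite relation truncates correctly and that inverting $v_n$ lets one drop it from the relation, both of which are immediate.
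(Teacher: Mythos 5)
Your proposal is correct and follows essentially the same route as the paper: base-change the presentation $AP(n)^{2*,*}(BG)\cong P(n)^{2*}[y]/(v_ny^{p^n}+v_{n+1}y^{p^{n+1}}+\cdots)$ (equivalently $\Omega^*(BG)\cong BP^*[y]/([p](y))$) along $BP^*\to K(n)^*$, kill $v_{n+1},v_{n+2},\dots$, and use that $v_n$ is a unit. The only point worth flagging is that the displayed relation is stated only modulo $I_\infty^2$, so strictly one is quotienting by $[p](y)\otimes 1$, whose lowest $y$-term is the unit multiple $v_ny^{p^n}$; this still yields $K(n)^*[y]/(y^{p^n})$ (Weierstrass preparation, or the paper's identification with the Honda group law), exactly as the paper intends.
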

Hence the corresponding formal group law is the Honda group law (identifying $v_n=1$).

The similar  (but more weak) facts hold for other groups. 
 (Theorem 12.1, Proposition 12.4 in [Ya8])
\begin{lemma}  
Let $X=BG$ and $G$ be  $\bZ/p$, 
the orthogonal groups 
$ O_m$, $SO_m$
(or their products).
Then we have $AP(1)^{odd.*'}(X)=0$ and
\[ AP(1)^{*'',*'}(X)\cong AP(1)^{2*,*}(X)\otimes\bZ/p[\tau]\]
\[where \quad AP(1)^{2*,*}(X)\cong P(1)^{2*}(X)\cong BP^{2*}(X)/p 
.\]
\end{lemma}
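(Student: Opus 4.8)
The plan is to run the motivic Atiyah--Hirzebruch spectral sequence of Theorem 2.1 for $AP(1)=ABP(I_1)$ and reduce it to the already understood topological AHss for $P(1)^*(BG)$ by exploiting the Tate structure of the motivic cohomology of these classifying spaces. Recall $I_1=(p)$, so the coefficient ring is $P(1)^*=BP^*/(p)\cong\bZ/p[v_1,v_2,\dots]$, and $AP(1)$ is built from $ABP$ by the cofibering (2.1) with $s_1=p$ (trivial Tate twist, since $|p|=0$).

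First I would record the coefficient input. For $G=\bZ/p$, $O_m$, $SO_m$ (and products), over $k=\bar k$, the cited results (Theorem 12.1 and Proposition 12.4 of [Ya8]) give that the motivic cohomology is Tate, namely
\[ H^{*,*'}(BG;\bZ/p)\cong H^*(BG;\bZ/p)\otimes\bZ/p[\tau], \]
with each topological generator placed in a fixed weight and $\tau\in H^{0,1}$ raising weight by one at fixed degree. Substituting into Theorem 2.1,
\[ E(AP(1))_2^{*,*',2*''}\cong H^*(BG;\bZ/p)\otimes P(1)^{2*''}\otimes\bZ/p[\tau], \]
which is precisely the $E_2$-page of the topological AHss converging to $P(1)^*(BG)$, tensored over $\bZ/p$ with $\bZ/p[\tau]$.

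Next I would match the differentials. By $\S4$ the AHss differentials are carried by the Milnor operations, the first nonzero one being $d_{2p-1}=v_1\otimes Q_1$ and the higher ones $d_{2p^i-1}=v_i\otimes Q_i$. A degree--weight count gives $Q_i\tau\in H^{2p^i-1,p^i}(Spec\,\bar k;\bZ/p)=0$, so on $H^*(BG;\bZ/p)\otimes\bZ/p[\tau]$ the motivic $Q_i$ equals $Q_i^{top}\otimes 1$ and $\tau$ is a permanent cycle. Hence the motivic AHss is the topological one tensored with $\bZ/p[\tau]$, with $E_\infty$-page $E_\infty^{top}\otimes\bZ/p[\tau]$. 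Since $P(1)^{odd}(BG)=0$ for these groups (the odd-degree generators being eliminated by the $Q_i$-differentials, as in the known topological calculation), this $E_\infty$-term is concentrated in even topological degree. This yields both $AP(1)^{odd,*'}(BG)=0$ and, passing to the weight-zero part $AP(1)^{2*,*}(BG)$ via (4.2), the splitting $AP(1)^{*,*'}(BG)\cong AP(1)^{2*,*}(BG)\otimes\bZ/p[\tau]$.

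Finally I would identify $AP(1)^{2*,*}(BG)$. The cofibering (2.1) with $s_1=p$ gives the exact sequence
\[ 0=AP(1)^{2*-1,*}(X)\to\Omega^*(X)\stackrel{p}{\to}\Omega^*(X)\to AP(1)^{2*,*}(X)\to ABP^{2*+1,*}(X)=0, \]
whose outer terms vanish by the oddness result just established; exactness then forces $\times p$ injective and $AP(1)^{2*,*}(X)\cong\Omega^*(X)/p$. The realization map $t_{\bC}$ matches this with $BP^{2*}(X)/p$, and the topological identity $P(1)^{2*}(X)\cong BP^{2*}(X)/p$ (from the cofibering $BP\stackrel{p}{\to}BP\to P(1)$ together with $BP^{odd}(BG)=0$) closes the chain. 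The main obstacle is the very first step: the Tate splitting of $H^{*,*'}(BG;\bZ/p)$ for $O_m$ and $SO_m$ is the substantive geometric input, supplied by [Ya8]; granting it, the differential bookkeeping, the permanence of $\tau$, and the comparison with the known topological answer are formal, the only other external fact borrowed being the topological vanishing $P(1)^{odd}(BG)=0$.
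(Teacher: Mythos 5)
Your proposal is correct and is essentially the paper's own approach: the paper gives no self-contained proof of this lemma (it simply cites Theorem 12.1 and Proposition 12.4 of [Ya8]), and the method it demonstrates in the surrounding cases ($G=\bZ/p$ in \S 5 and $G=SO_7$ in \S 6) is exactly your combination of the Tate structure of $H^{*,*'}(BG;\bZ/p)$, the $\tau$-linear $v_i\otimes Q_i$ differentials in the motivic AHss, and the cofibration exact sequence identifying $AP(1)^{2*,*}(X)\cong \Omega^*(X)/p$. The only remark worth adding is that the two steps you call ``formal'' --- that every higher motivic differential mirrors the topological one (the paper's Remark after Theorem 2.1 warns there is no comparison map of spectral sequences, so this needs the case-by-case check) and that realization gives $\Omega^*(BG)/p\cong BP^{2*}(BG)/p$ --- are precisely the substantive content delegated to [Ya8], so your argument has the same logical status as the paper's: correct modulo that citation.
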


{\bf Example.}
We consider the case $X=BG$ for $G=SO_3$ and $p=2$.  The cohomology is (given by the Stiefel-Whitney classes $w_2,w_3$
and Chern classes $c_2,c_3$) 
\[ H^*(X;\bZ/2)\cong\bZ/2[w_2,w_3].
\quad with \ \ Q_0w_2=w_3.\]
\[ Hence \ \ 
 H^*(X)\cong (\bZ\{1\}\oplus \bZ/2[c_3]^+)\otimes \bZ[c_2]\quad with \  c_i=w_i^2.\]
By using $Q_1(w_3)=w_3^2=c_3$  and the AHss, we have
\[  grBP^*(X)\cong (BP^*\{1\}\oplus 
 BP^* [c_3]^+/(2,v_1))\otimes \bZ[c_2] \]
Here note that in $BP^*(X)$, the element
$v_1c_3\not=0$ (it is $ v_2c_2^2+...$, this fact is shown considering the restriction
to $BP^*(B\bZ/2)$).

Hence we see 
$BP^*(X)/2\cong P(1)^*(X)$. 
(For  the details of the AHss converging $P(1)^*(X)$,
 see $\S 12$ in [Ya8].)

\section{Lie groups}

Let $k=\bar k\subset \bC$.
In this section, we study 
for $X=G$ ; Lie groups themselves.
For $G$ exceptional groups and $p\not =2$ are studied in [Ya1,2,3].

Hereafter 
let $X=G=SO_7$,
Then
\[ H^*(G;\bZ/2)\cong \Lambda(x_3,x_5,y_6),
\quad x_3^2=y_6 \]
\[ \quad H^*(G)\cong \Lambda(x_3)\otimes
(\bZ\{1,x_5y_6\}\oplus \bZ/2\{y_6\}).\]
Here the suffix $i$ of $x_i$, $y_6$  means its degree and  $x_5y_6$ is an element in $H^*(G)$
such that its image in $H^*(G;\bZ/2)$ is the same name.

We consider AHss
\[ E_2\cong H^*(G)\otimes BP^*
\Longrightarrow BP^*(G).\]
The differencial is
given by $d_3(x_3)=v_1Q_1x_3=v_1y_6. $
Hence infinitive term is writen as 
\[E_{\infty}\cong BP^*A\oplus
BP^*B/2\oplus BP^*C/(2,v_1)\]
\[with \quad A=\bZ\{1,2x_3,x_5y_6,x_3x_5x_6\},\quad 
B=\bZ/2\{x_3x_6\}, \quad C=\bZ/2
\{y_6\}.\]
Here $2y_3$ and $x_iy_6$ are elements in $E_{\infty}$ induced from the same named elements in $E_2$.

Then we can compute more  exactly [Ya1,2]
\[   BP^*(G) \ \  \cong \quad  \quad    BP^*\{1,2x_3,x_3x_5x_6\}\qquad  \qquad
\qquad \qquad \qquad \qquad  \]
\[\qquad \qquad \qquad  \qquad   \oplus \  
BP^*\{x_3y_6,x_5y_6]/(2x_3y_6-v_1x_5y_6) \]
\[
\oplus BP^*/(2,v_1)\{y_6\}.\]

Similarly, we can compute (for example
using $P(2)^*(X)\cong P(2)^*\otimes H^*(X;\bZ/2)$)
\[ P(1)^{*}(G)\cong( P(1)^*\{1,x_3y_6\}
\oplus P(2)^*\{y_6\})\otimes \Lambda(x_5).\]
Indeed, take  $x_5\in P(1)^5(X)$ with 
$\delta(x_5)=y_6$. The element $2x_3\in BP^*(X)$ goes (by the reduction $u$) to $v_1x_5\in P(1)^*(X)$
i.e., $u(2x_3)+v_1x_5=0$, recall $Q_1x_3=Q_0x_5$.

By Kac [Ya6], we knew ($CH^*(X)/2=\bZ/2\{1,y_6\}$)
\[  \Omega ^*(G)/I_{\infty}\cong CH^*(G)/2\cong \bZ/2\{1\}\oplus  \bZ/2\{y_6\}. \]
Moreoer we know $v_1y_6=0$ from $Q_1x_3=y_6$.  Hence we see
$w(y_6)=0$ but $ w(x_ix_j)\ge 1$ for
the other generators..
Thus we get
\[ ABP^{2*,*}(G)\cong \Omega^*(G)\cong
BP^*\{1\}\oplus
P(2)^*\{y_6\}.\]

Since $k=\bar k$, we see  $H_{et}^*(Spec(k);\bZ/2)\cong \bZ/2[\tau]$. Hence each $\tau^i$ is a permanent cycle in the above AHss, and 
we get 
\begin{thm} Let $G=SO_7$
 Then
\[AP(1)^{*,*'}(G) \cong \Omega^*(G)\otimes \bZ/2[\tau],\]
\[where\quad \Omega^*(G)/2\cong P(1)^*\{1\}\oplus P(2)^*\{y_6\} .\]
\end{thm}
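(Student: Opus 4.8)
The plan is to deduce the theorem from two ingredients: the computation $\Omega^*(G)=ABP^{2*,*}(G)\cong BP^*\{1\}\oplus P(2)^*\{y_6\}$ already carried out before the statement, together with Lemma 3.1; and a $\tau$-periodicity argument for the motivic Atiyah--Hirzebruch spectral sequence, available because $k=\bar k$. So I would first pin down the geometric part $AP(1)^{2*,*}(G)$ as $\Omega^*(G)/2$, and then show that $AP(1)^{*,*'}(G)$ is the free $\bZ/2[\tau]$-module on it (which is the same as $\Omega^*(G)\otimes_{\bZ}\bZ/2[\tau]$, since $BP^*\otimes_{\bZ}\bZ/2[\tau]=P(1)^*\otimes_{\bZ/2}\bZ/2[\tau]$ and $P(2)^*$ is already a $\bZ/2$-algebra).

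\emph{Step 1: the geometric part.} Since $G=SO_7$ is smooth, Lemma 3.1 with $n=1$ (so $I_1=(p)=(2)$) gives $AP(1)^{2*,*}(G)\cong\Omega^*(G)/I_1=\Omega^*(G)/(2)$. Substituting $\Omega^*(G)\cong BP^*\{1\}\oplus P(2)^*\{y_6\}$ and using $BP^*/2=P(1)^*=\bZ/2[v_1,v_2,\dots]$ while $P(2)^*=\bZ/2[v_2,v_3,\dots]$ already has $2=0$, I obtain $\Omega^*(G)/2\cong P(1)^*\{1\}\oplus P(2)^*\{y_6\}$, exactly the free module in the statement. The computation of $\Omega^*(G)$ itself (from the integral $BP^*(SO_7)$ of [Ya1,2], Kac's $CH^*(SO_7)/2\cong\bZ/2\{1,y_6\}$ [Ya6], the relation $Q_1x_3=y_6$ forcing $v_1y_6=0$, and the weight bookkeeping $w(y_6)=0$ with all other generators of positive weight) is done before the statement and I would simply cite it.

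\emph{Step 2: $\tau$-periodicity.} I would run the AHss of Theorem 2.1 for $Ah=AP(1)$,
\[E_2^{*,*',2*''}=H^{*,*'}(G;P(1)^{2*''})\ \Longrightarrow\ AP(1)^{*+2*'',*'+*''}(G),\qquad w(d_{2r+1})=-1.\]
Over $\bar k$ one has $H^{*,*'}(\mathrm{Spec}\,k;\bZ/2)=\bZ/2[\tau]$, concentrated in degree $0$ with $w(\tau)=2$; the AHss of $\mathrm{Spec}\,k$ is therefore degenerate, so each $\tau^i$ is a permanent cycle, and (the pullback $H^{0,*'}(\mathrm{Spec}\,k)\to H^{*,*'}(G)$ being split injective, as $G$ has a rational point) the whole spectral sequence for $G$ is $\bZ/2[\tau]$-linear. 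Its leading differential is $d_3=v_1\otimes Q_1$, in particular $d_3(x_3)=v_1Q_1x_3=v_1y_6$ as in the $w=0$ computation of Step 1; so the surviving $E_\infty$-page is the $E_\infty$-page of that computation tensored over $\bZ/2$ with $\bZ/2[\tau]$, and by (4.2) the $w=0$ part recovers $AP(1)^{2*,*}(G)=\Omega^*(G)/2$. Since $G$ carries the rational point $e$, multiplication by $\tau$ is injective on $AP(1)^{*,*'}(G)$ (as in the earlier $ABP^{*,*'}(G)$ computation), so there are no hidden $\tau$-extensions, whence $AP(1)^{*,*'}(G)\cong(\Omega^*(G)/2)\otimes_{\bZ/2}\bZ/2[\tau]$.

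The real work is the collapse in Step 2: one must verify that over $\bar k$ every differential in the motivic AHss past $d_3$ is forced, equivalently that no positive-weight class of $H^{*,*'}(SO_7;\bZ/2)$ (the classes built from $x_5$, $x_3x_5$, $x_3x_5y_6$) survives to contribute a summand of $AP(1)^{*,*'}(G)$ beyond the $\tau$-tower on the two geometric generators, so that $AP(1)^{*,*'}(G)$ is genuinely free over $\bZ/2[\tau]$ on $\Omega^*(G)/2$. This requires importing the precise differential pattern and multiplicative structure of the $BP$- and $P(1)$-computations of [Ya1,2] and the description of $H^{*,*'}(SO_7;\bZ/2)$ over $\bar k$; once the $d_r$ are pinned down, $\tau$-freeness and the absence of hidden extensions follow formally from $\tau$-injectivity together with the evenness of $\Omega^*(G)/2$.
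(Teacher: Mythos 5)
Your proposal is correct and takes essentially the same route as the paper: the weight-zero part is the previously computed $\Omega^*(SO_7)\cong BP^*\{1\}\oplus P(2)^*\{y_6\}$ (via Kac's Chow-ring result, $v_1y_6=0$ and the weight bookkeeping — you reach it through Lemma 3.1, which is equivalent to the paper's use of (4.2)), and the bigraded statement then follows because over $k=\bar k$ each $\tau^i$ is a permanent cycle, making the AHss $\bZ/2[\tau]$-linear. You are in fact more explicit than the paper about the remaining verification (that no positive-weight class of $H^{*,*'}(G;\bZ/2)$ survives and that there are no hidden $\tau$-extensions), which the paper compresses into a single sentence.
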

\begin{cor}  Let $G=SO_7$.  Then
\[ \ AK(s)^{2*,*}(G)\cong \begin{cases}
 K(1)^*\{1\} \quad for \ s=1 \\
 K(2)^*\{1,y_6\} \quad for\ \ s\ge 2\end{cases}\]
\end{cor}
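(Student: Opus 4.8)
The plan is to obtain $AK(s)^{2*,*}(G)$ directly from Theorem 6.1 by applying the general recipe $AK(s)^{2*,*}(X) \cong \Omega^*(X) \otimes_{BP^*} K(s)^*$, which is the content of Lemma 1.1, valid for smooth $X$ (and $G = SO_7$ is smooth). So the whole computation reduces to tensoring the already-computed module
\[ \Omega^*(G) \cong BP^*\{1\} \oplus P(2)^*\{y_6\} \]
against $K(s)^* = \bZ/p[v_s, v_s^{-1}]$ over $BP^*$, where here $p = 2$.

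First I would treat the summand $BP^*\{1\}$: since $BP^* \otimes_{BP^*} K(s)^* = K(s)^*$ for every $s$, this contributes a free rank-one summand $K(s)^*\{1\}$ regardless of $s$. Next I would treat $P(2)^*\{y_6\} = (BP^*/I_2)\{y_6\}$, where $I_2 = (2, v_1)$. Tensoring $BP^*/I_2$ with $K(s)^*$ over $BP^*$ gives $K(s)^*/(2, v_1)K(s)^*$. The point is that in $K(s)^* = \bZ/2[v_s^{\pm 1}]$ the element $2$ is already zero, so the relation $2 = 0$ is automatic; and the element $v_1$ maps to $0$ in $K(s)^*$ precisely when $s \neq 1$ (in $K(s)^*$ only $v_s$ survives as a unit and all other $v_i$ map to $0$), whereas for $s = 1$ the generator $v_1$ becomes the unit $v_1 = v_s$, so $v_1 \cdot K(1)^* = K(1)^*$ and the quotient $K(1)^*/(2,v_1) = 0$. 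Hence the $y_6$-summand vanishes when $s = 1$ and equals $K(s)^*\{y_6\}$ when $s \ge 2$. Assembling the two summands yields exactly the stated answer: $K(1)^*\{1\}$ for $s = 1$, and $K(s)^*\{1, y_6\}$ for $s \ge 2$ (noting that for $s \ge 2$ one has $K(s)^* \otimes_{BP^*} P(2)^* \cong K(s)^*$ since $P(2)^* \to K(s)^*$ kills exactly the right generators).

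The only genuine point requiring care — and the step I expect to be the main obstacle, if any — is justifying the Tor-vanishing so that $\otimes_{BP^*} K(s)^*$ applied to the direct-sum decomposition behaves well; that is, one should check that no higher Tor terms intervene and that the decomposition of $\Omega^*(G)$ as a $BP^*$-module is honest (not merely associated-graded). This is precisely where one invokes that $\Omega^*(G) \cong ABP^{2*,*}(G)$ has been computed as a genuine $BP^*$-module in Theorem 6.1 (via the AHss collapsing argument of Section 6), and that $K(s)^*$ is obtained from $BP^*$ by inverting $v_s$ and killing a regular sequence, so the relevant derived tensor product is concentrated in degree zero on each cyclic summand $BP^*/(\text{regular ideal})$. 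Once that bookkeeping is in place, the corollary is immediate from Lemma 1.1 and Theorem 6.1.
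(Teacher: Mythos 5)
Your proposal is correct and follows essentially the same (implicit) route as the paper: the corollary is obtained by tensoring the $BP^*$-module decomposition $\Omega^*(G)\cong BP^*\{1\}\oplus P(2)^*\{y_6\}$ from Theorem 6.1 with $K(s)^*$ over $BP^*$ via Lemma 1.1, the only content being that $v_1$ is a unit in $K(1)^*$ (killing the $y_6$-summand) but maps to zero in $K(s)^*$ for $s\ge 2$. Your extra worry about Tor terms is harmless but unnecessary, since the plain tensor product distributes over the honest direct-sum decomposition; the real input is precisely that Theorem 6.1 gives that decomposition as genuine $BP^*$-modules, which you correctly identify.
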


Quite recently, Gelthauser- Lavrenov-
 Petrov-Sechin [Ge-La-Pe-Se]
study the Morava  $K$-theories for all $SO_m$.

\section{Homotopy Nilpotency}

Let $X=G$ be a simply connected Lie group.  For ease of arguments, let us assume $p$ odd in this section.
By Borel, its $mod(p)$ cohomology  is a tensor product of truncated polynomial and an exterior algebra.

 Moreover when it has $p$-torsion. there is an surjective map [Ya4]
\[ (7.1)\quad H^{*}(X;\bZ/p)\twoheadrightarrow
A=\bZ/p[y]/(y^p)\otimes \Lambda(x,x')\]
\[where \quad |x|=3,\ \ P^1(x)=x',\ \ Q_1x=Q_0x'=y, \]
(Hence $|y|=2p+2$, $|x'|=2p+1$) 

 We also see, by dimensional reason, that for $X$ in (7.1) \[ (7.2)
\quad   K(2)^*(X)\twoheadrightarrow
K(2)^*\otimes A\]
We consider the homology theory (the $\bZ/p$-dual of the 
cohomology theory).  There is the injection (but $not$ a $ring$ map)
\[ K(2)_*\otimes B \subset K(2)_*(X) \
 where \quad B= \bZ/p[y]/(y^p)\otimes \Lambda(z,z'), \]
with $y$ ( resp.  $z.z'$ ) is the dual of $y$ (resp. $x,x'$)

We want to study the Pontryagin ring structure of $K(2)_*(X)$, i.e., the product is induced from that of the Lie group $G$.
Let us write the adjoint 
\[ ad(y)(z)=[y,z]=yz-zy.\]
(Here the product is the Ponriyagin product.)
The definition of homotopy nilpotents  implies  as all Pontriyagin products are nilpotents.  In particular 
\[ ad^i(y)(z)=0\quad for \ large \ i.\]

\begin{lemma} 
([Ya4] Theorem 1.5)
 Let $G$ in the   group (7.1).  Then $G$ is not homotopy nilpotent. 
\[ ad^{p-1}(y)(z)=-v_2z\not =0  \quad \in \ K(2)_*(G).\]
\end{lemma}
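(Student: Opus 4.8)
The plan is to reduce the statement to a computation in the Morava $K$-theory $K(2)_*(A)$ of the model space whose cohomology is $A = \bZ/p[y]/(y^p)\otimes\Lambda(x,x')$, using the surjection (7.1) and its homological dual. First I would identify, up to the relevant filtration, a space (or $H$-space retract) realizing the Hopf algebra $A$: since $|x|=3$, $|x'|=2p+1$, $|y|=2p+2$ with $P^1x=x'$ and $Q_1x=Q_0x'=y$, the element $y$ is a $v_2$-Bockstein of $x$ in the sense that the first nontrivial differential in the Atiyah--Hirzebruch spectral sequence for $K(2)^*$ is $d_{2p^2-1}(x)=v_2 Q_2 x$; but here the relevant relation is carried by $Q_1$, and the point is that in $K(2)^*$-theory the class $x$ is \emph{not} killed --- rather one gets a truncated polynomial algebra on the image of $y$ (this is exactly (7.2)). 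Dualizing, $K(2)_*(X)$ contains $K(2)_*\otimes B$ with $B=\bZ/p[y]/(y^p)\otimes\Lambda(z,z')$, where $z,z',y$ are dual to $x,x',y$; this is the injection (but not ring map) already recorded in the text.

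Next I would set up the Pontryagin product. The coproduct on $A$ is forced by primitivity: $x,x'$ are primitive (being odd-degree generators of lowest degree), and $y$ is primitive as well, so $y^i$ has coproduct $\sum \binom{i}{j} y^j\otimes y^{i-j}$. Dualizing, the Pontryagin product on $B$ makes $z,z'$ behave like exterior generators and $y$ like a divided-power-dual class; the key structural input is the commutator $[y,z]=yz-zy$ in $K(2)_*(G)$. The heart of the matter is that the \emph{associated graded} product (from the AHss filtration) is commutative, so all the information about non-nilpotence sits in the \emph{filtration jumps}: I expect that $ad(y)(z)$, which vanishes in the associated graded, is congruent modulo higher filtration to a class involving $v_2$ times a lower-filtration element. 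Concretely, the mechanism is that the relation $Q_1 x = Q_0 x' = y$ in cohomology, together with the $K(2)$-theoretic relation $v_2 Q_2 = $ (differential) and the Milnor relation $Q_2 = Q_1 P^{p} - P^p Q_1$, feeds into the formula $ad(y)(z) \equiv -v_2 z'$ or similar at the first stage, and iterating $p-1$ times produces $ad^{p-1}(y)(z) = -v_2 z$.

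The cleanest route is probably to import the topological computation directly. The statement $ad^{p-1}(y)(z)=-v_2 z\neq 0$ in $K(2)_*(G)$ is, for the algebraic $AK(2)$-theory, a consequence of the corresponding topological identity in $K(2)_*$ of the underlying Lie group (via the realization map $t_{\bC}$, which is compatible with products since $G$ is an algebraic group and $t_{\bC}$ is a ring map on $AK(2)$), combined with the fact that the realization map is injective on the relevant summand $K(2)_*\otimes B$ --- indeed over $k=\bar k\subset\bC$ the coefficient ring contributes only $\bZ/2[\tau]$-type factors that do not interfere (as in the pattern of Theorem 6.1). So I would: (i) recall or re-derive the topological identity $ad^{p-1}(y)(z)=-v_2 z$ in $K(2)_*$ of the model $H$-space with homology $B$, which is a standard unstable Morava $K$-theory computation driven by the $Q_1$-relation and the group-like structure --- this is Theorem 1.5 of [Ya4]; (ii) observe that (7.1)--(7.2) and their duals embed $K(2)_*\otimes B$ as a sub (non-unital, non-ring, but product-closed on the relevant commutators) object of $K(2)_*(X)$; (iii) transport via $t_{\bC}$ to conclude the same in $AK(2)_*(G)$; (iv) conclude $G$ is not homotopy nilpotent, since homotopy nilpotency would force $ad^i(y)(z)=0$ for large $i$, contradicting a nonzero value.

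The main obstacle is step (i): proving that the iterated adjoint lands on $-v_2 z$ rather than $0$ requires controlling the Pontryagin product in $K(2)_*(X)$ beyond the associated graded level, i.e.\ tracking the hidden extensions in the Atiyah--Hirzebruch spectral sequence. This is where the specific $K(2)$-theoretic input $d_{2p^2-1}=v_2\otimes Q_2$ and the Milnor relation among the $Q_i$ are essential; everything else (the dualization, the compatibility with $t_{\bC}$, the final logical step about nilpotency) is formal. I would present step (i) by citing [Ya4, Theorem 1.5] and only sketch how the $Q_1$-relation in (7.1) propagates, rather than redoing the full spectral-sequence bookkeeping.
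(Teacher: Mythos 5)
Your proposal ends up in the same place as the paper: the paper gives no independent proof of this lemma and simply cites [Ya4, Theorem 1.5] for the identity $ad^{p-1}(y)(z)=-v_2z$, which is exactly what your step (i) does, and the remaining formal steps (dualizing (7.1)--(7.2) to get $K(2)_*\otimes B\subset K(2)_*(G)$ and observing that a nonvanishing iterated adjoint contradicts homotopy nilpotency) match the surrounding text of \S 7. One small caution: your guessed first-stage formula $ad(y)(z)\equiv -v_2z'$ is not what happens --- by the paper's own Example ($F_4$, $p=3$), the classes $ad^{i}(y)(z)$ for $1\le i\le p-2$ are \emph{new} exterior generators and only the $(p-1)$-st iterate hits $-v_2z$ --- but since you explicitly defer that computation to [Ya4] this does not affect the argument; also note your steps (ii)--(iv) about $t_{\bC}$ and $AK(2)$ really belong to the following lemma (the algebraic statement), not to this purely topological one.
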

We  know 
\[x\in H^{3,3}(G;\bZ/p)
\cong H_{et}^3(G;\bZ/p)
\cong
H^3(X(\bC);\bZ/p).\]
Hence $x$ (resp. $z$) exists in $AK(2)^{*.*'}(X)$ (resp.$A K(2)_{*.*'}(X))$
By the compatible of $t_{\bC}$ and $K2)^*$ we have 
that $X$ is not homotopy nilpotent.

\begin{lemma} Let $G$ be an algebraic group,
corresponding a simply connected Lie group
which has a $p$-torsion.
Then $G$ is not homotopy nilpotent in $\bA^1$-homotopy category.
\end{lemma}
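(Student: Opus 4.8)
The plan is to deduce the statement from the topological Lemma~7.1 (i.e., [Ya4], Theorem~1.5) by transporting its obstruction along the complex realization functor $t_{\bC}$, working over $k=\bar k\subset\bC$. The functor $t_{\bC}$ is a product-preserving functor of pointed homotopy categories, and it sends the algebraic group $G$ to the underlying Lie group $G(\bC)$, which by hypothesis is simply connected and has $p$-torsion. Since homotopy nilpotency of a group object is formulated purely in terms of the finite-product structure --- the iterated commutator maps $G\times\cdots\times G\to G$ of sufficiently large length being null-homotopic --- it is preserved by any such functor. Hence, if $G$ were homotopy nilpotent in the $\bA^1$-homotopy category, $G(\bC)$ would be homotopy nilpotent as a Lie group, contradicting Lemma~7.1. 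The remaining paragraphs explain how to exhibit the same obstruction directly inside $AK(2)$, which keeps the argument within the framework of the previous sections.

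For that direct route I would first lift the classes $x,x',y$ to the algebraic theory. By the surjection (7.1) and the bidegree bookkeeping of $\S4$, the class $x$ lies in $H^{3,3}(G;\bZ/p)\cong H_{et}^{3}(G;\bZ/p)\cong H^{3}(G(\bC);\bZ/p)$, hence is in the image of $t_{\bC}^{*}$; applying the motivic operations then produces $x'=P^{1}x$ and $y=Q_{1}x=Q_{0}x'$ compatibly with the topological ones. A dimension count as in (7.2) shows these are detected by $AK(2)^{2*,*}(G)$, and since over $k=\bC$ the theory $AK(2)^{*,*'}(-)$ carries a ring structure (Theorem~7.4 of [Ya7]; cf.\ the remark after Lemma~3.1), the $\bZ/p$-dual supplies classes $y,z\in AK(2)_{*,*'}(G)$ mapped by $t_{\bC}$ to the like-named classes in $K(2)_{*}(G(\bC))$.

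Because $G$ is a group object, $AK(2)_{*,*'}(G)$ inherits a Pontryagin ring structure, $t_{\bC}$ is a homomorphism of Pontryagin rings on it, and $v_{2}$ is central and invertible (it comes from $K(2)^{*}=\bZ/p[v_{2},v_{2}^{-1}]$). Thus $t_{\bC}\bigl(ad^{p-1}(y)(z)\bigr)=-v_{2}z\neq 0$ in $K(2)_{*}(G(\bC))$, so $ad^{p-1}(y)(z)\neq 0$ already in $AK(2)_{*,*'}(G)$, and iterating gives $ad^{k(p-1)}(y)(z)=(-v_{2})^{k}z\neq 0$ for all $k\geq 1$. If $G$ were homotopy nilpotent in the $\bA^1$-homotopy category, the iterated commutator maps of large length would be null, forcing $ad^{i}(y)(z)=0$ for $i$ large --- a contradiction. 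The case $p=2$, excluded by the running convention of $\S7$, is covered by the explicit computation for $SO_{7}$ in $\S6$ (Theorem~6.1, Corollary~6.2), where $y_{6}$ plays the role of $y$.

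The step I expect to be the main obstacle is the lifting in the second paragraph: one must check that the weight shifts introduced by $P^{1}$ and $Q_{1}$ place $x',y$ in bidegrees where $AK(2)^{2*,*}(G)$ really detects them and that the resulting classes are nonzero, and that the Pontryagin structure used to form $ad(y)(z)$ together with the $\bZ/p$-dualization are compatible with $t_{\bC}$. If one is content with the purely formal first argument, the only nontrivial ingredient is the product-preservation of $t_{\bC}$ on the unstable $\bA^1$-homotopy category, which is standard; the rest then follows formally from Lemma~7.1.
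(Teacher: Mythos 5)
Your proposal is correct and follows essentially the same route as the paper: the paper's own (very terse) argument is exactly your ``direct route'' --- lift $x$ via $H^{3,3}(G;\bZ/p)\cong H^3(G(\bC);\bZ/p)$ to get classes in $AK(2)^{*,*'}(G)$ and its dual, and use compatibility with $t_{\bC}$ together with Lemma~7.1 to see that $ad^{p-1}(y)(z)$ is nonzero, hence the Pontryagin products are not nilpotent. Your opening formal argument (product-preservation of $t_{\bC}$ transports homotopy nilpotency) is a clean restatement of the same idea, and your write-up is considerably more careful about the lifting and duality steps than the paper's.
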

 
{\bf Examples.} Let $G=F_4,p=3$ (or $G=E_8, p=5$).  Then we
have 
\[ K(2)_*(G)\cong K(2)_* [y]/(y^p)\otimes C\otimes C'  \]
\[where \quad \begin{cases}C=\Lambda(z,ad^1(y)(z),...ad^{p-2}(y)(z)) \\
C'=\Lambda(z',ad^1(y)(z'),...ad^{p-2}(y)(z')).
\end{cases}\]
\[\quad ad^{p-1}(y)(z)=-v_2z.\quad 
 ad^{p-1}(y)(z')=-v_2z'.\]

\section{$C\hat ec$k complex for
 $k\not =\bar k$}

Suppose $ch(k)=0$  but we do not  assume
$k=\bar k$ in this section. (We do not assume $X$ smooth.)
The following lemmas also hold for the algebraic theories.
Let us write $Q(n)=\Lambda(Q_0,...,Q_n)$ for the Milnor operation $Q_i$.

\begin{lemma}
Let $P(1)^*(X)\cong  P(n)^*\otimes B_n, $ for some $\bZ/p$-module $B_n$, (i.e.
it is an $I_n$-torsion and $P(n)^*$-free module ).
Then \[P(s)^*(X)\cong P(n)^*\otimes Q(s-1)\otimes B_s\quad for \ s\le n
\]
where $Q_1...Q_{n-1}B_n=B_1$,  $ (i.e.,\ B_s=Q_1^{-1}...Q_s^{-1}B_1)$.
Hence \[ \quad K(s)^*(X)\cong
\begin{cases} 0\quad s<n \\
  K(n)^*\otimes Q(n-1)\otimes B_n
\quad  s= n
\end{cases}
\]

\end{lemma}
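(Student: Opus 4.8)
The idea is to induct on $s$, climbing the tower $P(1),P(2),\dots,P(n)$ one step at a time, the base case $s=1$ being the hypothesis. For the inductive step I would use the defining cofibre triangle
\[
\Sigma^{2(p^{s}-1)}P(s)\xrightarrow{\ v_{s}\ }P(s)\longrightarrow P(s+1)\xrightarrow{\ \delta\ }\Sigma^{2p^{s}-1}P(s),
\]
together with the triangle $\Sigma^{2(p^{n}-1)}P(n)\xrightarrow{v_{n}}P(n)\to H\bZ/p$ at the top, evaluate on $X$ to get the long exact sequence already used in the proof of Lemma 3.1, and recall from (4.1) and \S4 that these connecting maps are governed by the Milnor operations $Q_{s}$ (equivalently, that the relevant first differential in the Atiyah--Hirzebruch spectral sequences is $v_{s}\otimes Q_{s}$). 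The inductive statement to carry is: $P(s)^{\ast}(X)$ is $I_{n}$-torsion (so $v_{s},\dots,v_{n-1}$ act as $0$) and is a free $P(n)^{\ast}$-module of the asserted shape $P(n)^{\ast}\otimes Q(s-1)\otimes B_{s}$.

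Granting this for $s$: since $v_{s}$ acts as $0$ on $P(s)^{\ast}(X)$, the long exact sequence collapses to short exact sequences $0\to P(s)^{m}(X)\to P(s+1)^{m}(X)\to P(s)^{\,m+2p^{s}-1}(X)\to 0$. These split as graded $\bZ/p$-modules, so $P(s+1)^{\ast}(X)\cong P(s)^{\ast}(X)\oplus P(s)^{\ast}(X)[\,2p^{s}-1\,]$ additively; since the sub is a $P(s+1)^{\ast}$-submodule and the quotient is $I_{n}$-torsion, the $P(n)^{\ast}$-action is compatible with the sequence, and as the quotient is projective over the polynomial ring $P(n)^{\ast}$ the splitting can be arranged over $P(n)^{\ast}$, whence $P(s+1)^{\ast}(X)$ is again $P(n)^{\ast}$-free. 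One then identifies the $v_{s+1}$-action through the connecting map: this simultaneously shows $v_{s+1},\dots,v_{n-1}$ still act as $0$ (for $s+1\le n-1$, keeping the tower $I_{n}$-torsion) and exhibits $B_{s}$ as the $Q_{s}$-Margolis homology of $B_{s+1}$ — which is exactly what the symbolic relations $Q_{1}\cdots Q_{n-1}B_{n}=B_{1}$, $B_{s}=Q_{1}^{-1}\cdots Q_{s}^{-1}B_{1}$ record. A degree count, and in the algebraic version a motivic weight count as in \S4, is what shows these splittings are forced and the new exterior factor lands as claimed.

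For the assertions about $K(s)^{\ast}(X)$ I would invoke the Conner--Floyd-type formula (3.1), $K(s)^{\ast}(X)\cong K(s)^{\ast}\otimes_{P(s)^{\ast}}P(s)^{\ast}(X)$. When $s<n$, $v_{s}$ is a unit of $K(s)^{\ast}=\bZ/p[v_{s},v_{s}^{-1}]$ while, by the torsion property just established, $v_{s}$ annihilates $P(s)^{\ast}(X)$, so the tensor product vanishes. When $s=n$, $v_{n}$ is a non-zero-divisor on $P(n)^{\ast}(X)=P(n)^{\ast}\otimes Q(n-1)\otimes B_{n}$, and inverting it gives $K(n)^{\ast}(X)\cong K(n)^{\ast}\otimes Q(n-1)\otimes B_{n}$.

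I expect the real work to be the inductive step, and within it the preservation of the $I_{n}$-torsion property: one must show the short exact sequences above create no nonzero action of $v_{s+1}$ on $P(s+1)^{\ast}(X)$, i.e. that the extension is controlled. This is the content-bearing point — the analogue of the delicate ``$v_{1}c_{3}\neq 0$'' phenomenon in the $SO_{3}$ example of \S5 — and it is where the $P(n)^{\ast}$-freeness together with the degree/weight bounds must be used. Once it is in hand, the collapse of the long exact sequences, the splitting over $P(n)^{\ast}$, and the $K(n)$-computation are routine.
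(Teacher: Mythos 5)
Your proposal follows essentially the same route as the paper: induction on $s$ up the tower of cofibre sequences, using that $v_s$ acts as zero on the $I_n$-torsion module $P(s)^*(X)$ to collapse the long exact sequence into short exact sequences, identifying the new exterior generator via $Q_s=r\delta$, and deducing the $K(s)^*(X)$ statements from the Conner--Floyd formula (3.1) by inverting or killing $v_s$. You are in fact more explicit than the paper about the one delicate point --- that the extension must split over $P(n)^*$ with no hidden action of $v_{s+1},\dots,v_{n-1}$ --- which the paper's proof passes over without comment.
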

\begin{proof}
Let $s< n$. 
By induction, we assume
\[P(s)^*(X)\cong P(n)^*\otimes Q(s-1)B_s.\]
 We consider the exact sequence (2.1)
\[   \begin{CD}
 P(s)^*(X)@>{v_s}>>  P(s)^*(X)@>{r}>>  P(s+1)^*(X)
@>{\delta}>>  P(s)^{*+1}(X)...
\end{CD} \]
Here we assume $P(s)^*(X)$ is a $P(n)^*$-module  by induction.
That is $v_s=0$.  Hence we have
\[ P(s+1)^*(X)\cong  \bZ/p\{r,\delta^{-1}\}P(s)^*(X)\]
\[ \cong  \bZ/p\{r,\delta^{-1}\}\otimes P(n)^*\otimes Q(s-1)B_s
\quad by \ induction\]
\[ \cong  \bZ/p\{1,Q_s^{-1}\}\otimes P(n)^*\otimes Q(s-1)B_s
\quad by \ Q_s=r\delta\]
\[\cong P(n)^*\otimes Q(s-1)\otimes\bZ/p\{ B_s, B_{s+1}\} \quad by\  B_s=Q_sB_{s+1}. \]
\[\cong P(n)^*\otimes Q(s)\otimes B_{s+1}. \]
Taking $s=n-1$, we get the result.  \end{proof}

\begin{cor}  By the same assumption as the above lemma.
we see that
\[H^*(X;\bZ/p)\cong Q(n-1) \otimes B_n\]
\end{cor}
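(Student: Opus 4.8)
The plan is to re-run the induction from the proof of Lemma~8.1, but to continue it past the index $s=n$ instead of stopping there. For $s<n$ that proof used the exact sequence (2.1)
\[ P(s)^*(X)\stackrel{v_s}{\to} P(s)^*(X)\stackrel{r}{\to} P(s+1)^*(X)\stackrel{\delta}{\to} P(s)^{*+1}(X) \]
together with the fact that $v_s$ acts as $0$ on $P(s)^*(X)$, which split off a new exterior factor $\bZ/p\{1,Q_s^{-1}\}$ at each stage and produced $P(n)^*(X)\cong P(n)^*\otimes Q(n-1)\otimes B_n$. The observation I would exploit now is that this module is \emph{free} over $P(n)^*=\bZ/p[v_n,v_{n+1},\dots]$, since $Q(n-1)\otimes B_n$ is a plain $\bZ/p$-module. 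Consequently, for every $s\ge n$ the class $v_s$ is a non-zero-divisor on $P(s)^*(X)$, so in (2.1) the connecting map $\delta$ vanishes and
\[ P(s+1)^*(X)\;\cong\;P(s)^*(X)/(v_s)\;\cong\;P(s+1)^*\otimes Q(n-1)\otimes B_n, \]
which is again free over $P(s+1)^*$ with the \emph{same} exterior part $Q(n-1)$ — no new $Q_s$ is produced, in contrast with the steps $s<n$. Iterating, $P(m)^*(X)\cong P(m)^*\otimes Q(n-1)\otimes B_n$ for every $m\ge n$, and under these identifications the reduction maps $r\colon P(m)^*(X)\to P(m+1)^*(X)$ become ``reduce the $P(m)^*$-factor modulo $v_m$''.

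The second and final step is the passage to the limit $m\to\infty$. Because $P(m+1)=P(m)/(v_m)$, we have $H\bZ/p\simeq\operatorname{hocolim}_m P(m)$; equivalently, the fibre of $P(m)\to H\bZ/p$ is $2(p^m-1)$-connective, so $P(m)^d(X)\to H^d(X;\bZ/p)$ is an isomorphism once $d<2(p^m-1)$. Hence in each fixed degree the tower $\{P(m)^*(X)\}_{m\ge n}$ stabilizes to $H^*(X;\bZ/p)$. On the other hand, in that same degree the right-hand sides computed in Step~1 stabilize to $\bigl(\varinjlim_m P(m)^*\bigr)\otimes Q(n-1)\otimes B_n=\bZ/p\otimes Q(n-1)\otimes B_n$, since $\varinjlim_m\bZ/p[v_m,v_{m+1},\dots]=\bZ/p$ degreewise. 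Equating the two stable values gives $H^*(X;\bZ/p)\cong Q(n-1)\otimes B_n$. The same argument runs verbatim in the motivic setting used throughout this section, with $P(m)^{*,*'}$ in place of $P(m)^*$ and $AH\bZ/p\simeq\operatorname{hocolim}_m AP(m)$.

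The step I expect to be the genuine point, rather than a formality inherited from Lemma~8.1, is this last one: one must know that applying $[X,-]$ (resp.\ the appropriate motivic mapping groups) to the Postnikov-type tower $\{P(m)\}$ recovers $H(-;\bZ/p)$. For $X$ finite-dimensional this is immediate because $[X,-]$ commutes with $\operatorname{hocolim}_m$; for the $X$ that actually occur here — reduced \v{C}ech complexes $\tilde\chi_V$ of norm varieties — one should instead invoke the degreewise connectivity estimate above, checking that in each (bi)degree only finitely many of the cofibre sequences $P(m)\to P(m+1)$ are relevant. One must also make sure the isomorphisms produced in Step~1 are compatible with the structure maps of the tower (they are, since $r$ \emph{is} the structure map), so that the colimit is the asserted $\bZ/p\otimes Q(n-1)\otimes B_n$ and nothing larger.
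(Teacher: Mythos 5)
Your proposal follows essentially the same route as the paper: continue the exact sequence (2.1) past $s=n$, observe that $v_s$ acts injectively because $P(s)^*(X)\cong P(s)^*\otimes Q(n-1)\otimes B_n$ is $P(s)^*$-free, and then pass to the limit $\lim_{s\to\infty}P(s)^{*}(X)\cong H^{*}(X;\bZ/p)$ via the Hopkins--Morel identification. Your treatment of the limit step (degreewise connectivity and compatibility of the identifications with the reduction maps $r$) is more careful than the paper's one-line citation, but it is the same argument.
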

\begin{proof}
We study the exact sequence in the proof of the
preceding lemma, but for $s\ge n+1$.  Since the map 
$v_s$ on $ P(s)^*(X)\cong P(s)^*B_n$ is  injective, we have
\[P(s+1)^*(X)\cong P(s+1)^*\otimes B_n.\]
We get the corollary
(for the  algebraic case  from the Hopkins  and Morel  theorem 
$ lim_{s\to \infty}AP(s)^{*,*'}(X)
\cong
 H^{*,*'}(X;\bZ/p).$)
\end{proof}
\begin{lemma}  Suppose that as a $Q(n-1)$-module 
\[H^*(X;\bZ/p)\cong Q(n-1)\otimes B_n. %\quad with \
%|B_n|<-|v_{n+1}|=2p^{n+2}-2.\]
\]
Then we have $P(1)^*(X)\cong P(n)^*\otimes B_1$.
\end{lemma}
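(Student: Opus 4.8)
The statement is the converse of Lemma 8.1 (combined with Corollary 8.2), so the plan is to run the exact--sequence bookkeeping of $\S 8$ in reverse. The ingredients are the long exact sequences coming from (2.1), for each $s\ge 1$,
\[
\cdots\to P(s)^*(X)\stackrel{v_s}{\to}P(s)^*(X)\stackrel{r}{\to}P(s+1)^*(X)\stackrel{\delta}{\to}P(s)^{*+1}(X)\to\cdots ,
\]
in which the composite $r\delta$ realizes the Milnor operation $Q_s$, together with the Hopkins--Morel identification $\lim_{s\to\infty}P(s)^*(X)\cong H^*(X;\bZ/p)$ used in the proof of Corollary 8.2. First I would record the module--theoretic translation of the hypothesis: since the one--variable complex $(\Lambda(Q_s),Q_s)$ is acyclic, a module that is free over $Q(n-1)=\Lambda(Q_0,\dots,Q_{n-1})$ has vanishing $Q_s$--Margolis homology for every $s\le n-1$, so the assumption says exactly that each of $Q_0,\dots,Q_{n-1}$ acts freely on $H^*(X;\bZ/p)$.

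The cleanest way to package the reconstruction of $P(1)^*(X)$ is via the (motivic) Atiyah--Hirzebruch spectral sequence of Theorem 2.1 for $P(1)^{*}(X)=AP(1)^{*,*'}(X)$: one has $E_2\cong H^{*,*'}(X;\bZ/p)\otimes P(1)^{*}$, and by $\S 4$ the first possibly nonzero differential carried by $v_s$ is $d_{2p^s-1}=v_s\otimes Q_s$. Because $Q_1,\dots,Q_{n-1}$ act freely, each of $d_{2p-1},d_{2p^2-1},\dots,d_{2p^{n-1}-1}$ is as efficient as possible: $d_{2p^s-1}$ annihilates every $v_s$--tower and collapses the exterior factor $\Lambda(Q_s)$ of $H^*(X;\bZ/p)$ onto its top class. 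After these $n-1$ pages one is left with a term of the form
\[
E_{2p^{n-1}}\cong P(n)^{*}\otimes B_1 ,
\]
where $B_1$ is, up to the untouched $\Lambda(Q_0)$--factor, the submodule $Q_1\cdots Q_{n-1}B_n$ of $H^*(X;\bZ/p)$ appearing in the statement; the assertion is then that this is already $E_\infty$ and carries no nontrivial extension, which gives $P(1)^*(X)\cong P(n)^*\otimes B_1$. The tower argument of Lemma 8.1, read in reverse, yields the same conclusion: for $s\le n-1$ the vanishing of $Q_s$--Margolis homology forces $v_s$ to act trivially on $P(s)^*(X)$, after which the exact sequence above recovers $P(s)^*(X)$ from $P(s+1)^*(X)$, the descent being started from the $s\ge n$ identification $P(n)^*(X)\cong P(n)^*\otimes Q(n-1)\otimes B_n$ that also underlies Corollary 8.2.

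The main obstacle is the control of everything past the first $n-1$ pages: the hypothesis constrains only $Q_0,\dots,Q_{n-1}$, so one must still rule out the later differentials $d_{2p^s-1}=v_s\otimes Q_s$ with $s\ge n$ on $E_{2p^{n-1}}\cong P(n)^*\otimes B_1$ and resolve the (module and multiplicative) extension problem. For the differentials I expect the argument to use that, by the way $B_1$ was produced, its classes already lie in the top exterior degree of what remains of $H^*(X;\bZ/p)$, so that $v_s\otimes Q_s$ has nowhere to land; making this precise is the delicate step, and it is where the full strength of $Q(n-1)$--freeness (not merely the finitely many Margolis vanishings used for $s<n$) is needed, possibly together with the weight estimate $w(d_r)=-1$ of $\S 4$. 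In the non--closed--field case one must in addition carry the second motivic index through the whole argument, but the $\tau$--towers are permanent cycles (as in $\S 5$--$\S 6$) and only contribute a tensor factor $\bZ/p[\tau]$, so that part is bookkeeping rather than substance.
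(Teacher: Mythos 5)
Your proposal follows essentially the same route as the paper: the paper's proof runs the Atiyah--Hirzebruch spectral sequence $E_2\cong Q(n-1)\otimes B_n\otimes P(1)^*$ with the differentials $d_{2p^i-1}=v_i\otimes Q_i$, uses $Q_i$-freeness to collapse each exterior factor in turn, and arrives at $E_\infty\cong P(1)^*/(v_1,\dots,v_{n-1})\otimes Q_1\cdots Q_{n-1}B_n\cong P(n)^*\otimes B_1$. The issues you flag (higher differentials for $s\ge n$ and extensions) are handled in the paper only by the brief appeal to ``induction and the assumption of degree,'' so your account is, if anything, more candid about the delicate points.
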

\begin{proof}
We consider AHss
\[ 
E_2\cong H^*(X;\bZ/p)\otimes P(1)^*
\cong Q(n-1)B_n\otimes P(1)^* \Longrightarrow P(1)^*(X).\]
By the naturality, the first non-zero differential is given
\[ d_{2p-1}(B_n)= v_1\otimes Q_1B_n.\]

Hence $Ker(d_{2p-1})$ is $ P(1)^*\otimes Q_1B_n$ from $(Q_1)^2=0$.  Moreover we see
\[ E_{2p}\cong P(1)^*\otimes Q_1B_n/(v_1Q_1B_n).\]

By induction and the assumption of degree
and $d_{2p^i-1}(x)=v_i\otimes Q_ix$, we can compute
\[ E_{\infty}\cong P(1)^*/(v_1,v_2 ,..., v_{n-1})\otimes Q_1Q_2...Q_{n-1})B_n\]
which is isomorphic to $P(n)^*\otimes B_1$.
(Note that $Q_1...Q_{n-1}B_n\in P(1)^*(X)$,)
\end{proof}

\begin{cor}
Let $P(1)^*(X)\cong P(n)^*\otimes B_n$.  Then
\[ K(n)^*(X)\cong K(n)^*\otimes B_n,\quad and\ \  
K(s)^*(X)\cong 0 \ \ s<n.\]
\end{cor}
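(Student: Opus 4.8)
The plan is to read this off Lemma 8.1 (the first lemma of this section), whose hypothesis is exactly the one assumed here, together with the defining formula (3.1): $K(s)^*(X)=K(s)^*\otimes_{P(s)^*}P(s)^*(X)$, where $K(s)^*=\bZ/p[v_s,v_s^{-1}]$. Once Lemma 8.1 has described $P(s)^*(X)$ as a $P(s)^*$-module, the statement is just a base change along $P(s)^*\to K(s)^*$.

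For the vanishing $K(s)^*(X)\cong 0$ with $s<n$: Lemma 8.1 identifies $P(s)^*(X)$ with $P(n)^*\otimes Q(s-1)\otimes B_s$, which is in particular a module over $P(n)^*=P(s)^*/I_n$; since $s<n$ we have $v_s\in I_n$, so $v_s$ acts as zero on $P(s)^*(X)$. But $v_s\in P(s)^*$ maps to a unit of $K(s)^*$, so $K(s)^*\otimes_{P(s)^*}P(s)^*(X)=0$: every simple tensor $\lambda\otimes m$ equals $(\lambda v_s^{-1})\otimes (v_s m)=0$. (For $s=1$ this is already immediate from the hypothesis alone, without invoking Lemma 8.1.)

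For $s=n$: Lemma 8.1 gives that $P(n)^*(X)$ is a free $P(n)^*$-module, $P(n)^*(X)\cong P(n)^*\otimes Q(n-1)\otimes B_n$; tensoring this free module over $P(n)^*$ with $K(n)^*$ is then straightforward and yields
\[ K(n)^*(X)=K(n)^*\otimes_{P(n)^*}P(n)^*(X)\cong K(n)^*\otimes Q(n-1)\otimes B_n, \]
which is the claimed $K(n)^*(X)\cong K(n)^*\otimes B_n$ once $B_n$ in the conclusion is read as the $\bZ/p$-module $Q(n-1)\otimes B_n$ (equivalently, by Corollary 8.2, as $H^*(X;\bZ/p)$). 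The algebraic statement is proved verbatim, with $P(n),K(n)$ replaced by $AP(n),AK(n)$ and $AK(n)^{*,*'}(X)=K(n)^*\otimes_{BP^*}AP(n)^{*,*'}(X)$ in place of (3.1).

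I do not expect a genuine obstacle here: all the work has been done in Lemma 8.1, and what remains is a formal tensor computation. The only thing to watch is the bookkeeping of the free $P(n)^*$-module --- the module controlling the $K(n)$-theory picks up the exterior factor $Q(n-1)=\Lambda(Q_0,...,Q_{n-1})$ relative to the $B_n$ of the hypothesis, exactly as tracked in Lemma 8.1 and Corollary 8.2 --- so the statement is to be read with that relabeling understood. If one preferred not to quote Lemma 8.1, the substitute is to run the $v_i$-Bockstein exact sequences $P(i)^*(X)\stackrel{\times v_i}{\to}P(i)^*(X)\to P(i+1)^*(X)$ for $i=1,...,n-1$ directly, using $Q_i=r\delta$, which is precisely the induction carried out in the proof of Lemma 8.1.
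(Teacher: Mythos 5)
Your proposal is correct and follows essentially the same route as the paper: the paper's one-line proof is exactly your observation that $K(s)^*\otimes_{BP^*}P(n)^*=0$ for $s<n$ (since $v_s\in I_n$ acts as zero on a $P(n)^*$-module but is inverted in $K(s)^*$), with the $s=n$ case being base change of the free module from Lemma 8.1. Your remark that the stated conclusion $K(n)^*\otimes B_n$ should really carry the factor $Q(n-1)$ (i.e.\ the $B_n$ of the conclusion is $Q(n-1)\otimes B_n\cong H^*(X;\bZ/p)$ of Corollary 8.2) is a fair and correct reading of a notational slip in the statement, not a defect of your argument.
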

\begin{proof}
 We have the corollary  from $K(s)^*\otimes P(n)^*_{BP^*}\otimes B_s=0$.
\end{proof}

	{\bf Example 1.} $V(n)$. 
 
In the (topological) stable homotopy category,  the space $X=V(n)$
is called the Smith-Toda space (spectrum) if
\[ P(1)^*(X)\cong P(n+1)^* \quad
(or\ H^*(X;\bZ/p)\cong Q(n)).\]

If $V(n)$ exists then we can define a ( non-zero) stable homotopy
element of a sphere
\[ S^0 \to V(n)\stackrel{v_n}{\to}V(n)\stackrel{qu.}
{\to}
S^N \quad some \ N\]
This is thought as an important problem in the stable homotopy groups of spheres.
However $V(n)$ does not exist in general.  For example
$V(3)$ exists if and only if $p\ge 7$.

{\bf Example 2 } ; norm variety for  $\bar k\not =k$.

Let $\chi_X$ be the $\hat Cech$ complex, which is defined
as $(\chi_X)^n=X^{n+1}$ (see details [Vo1,2,4]).  Let
$\tilde \chi_X$ be defined from the cofiber sequence 
\[   \tilde \chi_X\to \chi_X \to Spec(k)\]
in the stable $\bA^1$-homotopy category.
Voevodsky defined the motivic cohomology $H^{*,*'}(\chi;\bZ/p)$
for all objects $\chi$ in the stable $\bA^1$-homotopy category.

It is known that
the ideal 
\[ I(X)=\pi_*BP^*(X)\subset BP^*(pt.)=BP^*\quad for\ \pi: X\to pt.\]
is an invariant ideal (e.g. Lemma 5.3 in [Ya7]).

The following two lemmas are known.
\begin{lemma} [Vo1]
Let $p:X\to Spec(k)$ be the projection and $t_{\bC}(X)=v
\in  BP^*$.  Let $Ah=ABP(S)$ for some regular sequence $S$.
Then 
\[Ah^{*,*'}(\tilde \chi_X)\  is \ \ v-torsion.\]
\end{lemma}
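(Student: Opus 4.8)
The plan is to combine the ``$X$-acyclicity'' of $\tilde\chi_X$ with the Gysin transfer attached to the structure map $p\colon X\to\mathrm{Spec}(k)$, assuming, as holds for norm varieties, that $X$ is smooth projective; put $d=\dim X$. Let $q\colon X\times\tilde\chi_X\to\tilde\chi_X$ be the base change of $p$, i.e.\ the projection, and let $q^*$ be its pullback on $Ah^{*,*'}$. I would establish two things: first, that $X\times\tilde\chi_X\simeq\ast$, so that $q^*=0$; and second, that there is a Gysin pushforward $q_*\colon Ah^{*,*'}(X\times\tilde\chi_X)\to Ah^{*-2d,*'-d}(\tilde\chi_X)$ with $q_*q^*=v\cdot(-)$. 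These together force $v\cdot Ah^{*,*'}(\tilde\chi_X)=q_*\bigl(q^*Ah^{*,*'}(\tilde\chi_X)\bigr)=0$, which is even stronger than the asserted $v$-torsion.

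For the first point I would use that $\chi_X$ becomes simplicially contractible after crossing with $X$: level by level, $\chi_X\times X$ is the \v{C}ech nerve of the split surjection $\mathrm{pr}_1\colon X\times X\to X$ (split by the diagonal), which carries an extra degeneracy; hence the augmentation $\chi_X\times X\to X$ is a simplicial homotopy equivalence, a fortiori an $\bA^1$-equivalence. This augmentation is precisely the structure map $\chi_X\to\mathrm{Spec}(k)$ smashed with $\Sigma^\infty_+X$, so smashing the cofiber sequence defining $\tilde\chi_X$ with $\Sigma^\infty_+X$ produces a cofiber sequence one of whose maps is an equivalence; therefore $\Sigma^\infty_+X\wedge\tilde\chi_X\simeq\ast$, so $Ah^{*,*'}(X\times\tilde\chi_X)=0$ and $q^*=0$.

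For the second point I would invoke Atiyah duality: since $X$ is smooth projective of dimension $d$, $\Sigma^\infty_+X$ is strongly dualizable in the stable $\bA^1$-homotopy category, and because $Ah=ABP(S)$ is an $ABP$-, hence $AMU=MGL_{(p)}$-, module, the Thom isomorphism identifies its dual with $\Sigma^{-2d,-d}\Sigma^\infty_+X$ over $Ah$. This furnishes, for the projection $q\colon\Sigma^\infty_+X\wedge W\to W$ over an arbitrary spectrum $W$ (in particular $W=\tilde\chi_X$), a pushforward $q_*$ compatible with base change and satisfying the projection formula $q_*(q^*(a)\cdot b)=a\cdot q_*(b)$. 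Over $W=\mathrm{Spec}(k)$ this is the usual Gysin map $p_*$, so $q_*(1)=p_*(1_X)=[X]\in Ah^{-2d,-d}(\mathrm{Spec}(k))=BP^{-2d}/(S)$; since $t_{\bC}$ carries this to $v$ and is an isomorphism on $Ah^{2*,*}(\mathrm{Spec}(k))$, we identify $[X]$ with the image of $v$. Base change then gives $q_*(1)=v\cdot1$ over $\tilde\chi_X$, so the projection formula with $b=1$ yields $q_*q^*(a)=a\cdot q_*(1)=v\cdot a$, as required.

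The step I expect to be the main obstacle --- or at least the one demanding the most care --- is the second: constructing the Gysin transfer along the smooth projective morphism $q$ whose base $\tilde\chi_X$ is a general motivic spectrum rather than a smooth scheme, and pinning down $q_*(1)=v$. A naive geometric pushforward is unavailable; the point is to argue in $MGL$-modules, where the strong dualizability of $\Sigma^\infty_+X$ and the Thom isomorphism are ``absolute'' and base change along $W$ is formal. The first point, by contrast, is the standard simplicial input, entirely formal once one recognizes $\chi_X\times X$ as the \v{C}ech nerve of a split surjection.
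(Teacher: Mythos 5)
Your argument is the same as the paper's: the paper also proves this by combining the transfer identity $p_*p^*=v\cdot(-)$ for the projection $p:X\to Spec(k)$ with the vanishing of $Ah^{*,*'}$ on $\tilde\chi_X\wedge X_+$ (coming from the $\bA^1$-contractibility of $\chi_X\times X\to X$). You supply considerably more detail on the construction of the Gysin transfer over a general base spectrum, which the paper simply asserts, but the route is identical.
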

\begin{proof}
Let us write $\chi _X$ by $\chi$ simply.
We consider the maps
\[ Ah^{*.*'}(\chi)\stackrel{p^*}{\to
}   Ah^{*,*'}(\chi\times X)
\stackrel{p_*}{\to}
Ah^{d,d'}(\chi)  \]
so that $p_*p^*(x)=vx$.
The lemma is proved from $Ah^{*.*'}(\tilde \chi)=0$ because
\[ Ah^{*,*'}(\chi\times X)\cong 
Ah^{*,*'}(\chi).\]

\end{proof}

\begin{lemma} (Lemma 6.4,6.5 in \cite{YaB})
If $I_{n}\subset I(X)$, then
$ABP^{*,*'}(\tilde \chi_X)$ is $I_n$-torsion,
and $H^{*,*'}(\tilde \chi_X;\bZ/p)$ is $Q(n-1)$-free.
\end{lemma}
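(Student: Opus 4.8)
The plan is to establish the two assertions separately: the $I_n$-annihilation of $ABP^{*,*'}(\tilde\chi_X)$ by iterating the preceding lemma (Lemma 8.5, the ``$v$-torsion'' lemma), and the $Q(n-1)$-freeness of motivic cohomology by rerunning the machine of Lemma 8.1 and Corollary 8.2 with $\tilde\chi_X$ in place of the smooth variety there.

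\textbf{The ideal.} By definition $I(X)\subseteq BP^*$ is generated by the pushforwards $\pi_*\alpha$, $\alpha\in ABP^{2*,*}(X)$, and the projection formula $\pi_*(\pi^*r\cdot\alpha)=r\cdot\pi_*\alpha$ turns any $BP^*$-linear combination of these into a single pushforward. Hence $I_n\subseteq I(X)$ provides, for each $i\in\{0,1,\dots,n-1\}$ (writing $v_0=p$), a class $\gamma_i\in ABP^{2*,*}(X)$ with $\pi_*\gamma_i=v_i$. I would then repeat the argument of Lemma 8.5 with the Gysin map taken along $\gamma_i$: the composite
\[ ABP^{*,*'}(\tilde\chi_X)\xrightarrow{\ \pi^*\ }ABP^{*,*'}(\tilde\chi_X\wedge X_+)\xrightarrow{\ (\gamma_i)_*\ }ABP^{*,*'}(\tilde\chi_X)\]
is multiplication by $v_i$ (projection formula again), while the middle group is $0$ because $\chi_X\to\operatorname{Spec}k$ becomes an equivalence after smashing with $X_+$, so that $\tilde\chi_X\wedge X_+$ is contractible. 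Thus $v_i$ kills $ABP^{*,*'}(\tilde\chi_X)$ for every $0\le i\le n-1$, and therefore $I_n\cdot ABP^{*,*'}(\tilde\chi_X)=0$.

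\textbf{The cohomology.} From the first part $ABP^{*,*'}(\tilde\chi_X)$ is a module over $BP^*/I_n=P(n)^*$, and feeding it through the cofiber sequence (2.1) for $\times p$ shows the same for $AP(1)^{*,*'}(\tilde\chi_X)$; in particular $v_1,\dots,v_{n-1}$ all act as $0$ on it. I would then run, with $\tilde\chi_X$ in place of $X$, the inductive argument of Lemma 8.1: because $v_s=0$ on $AP(s)^{*,*'}(\tilde\chi_X)$ for $1\le s\le n-1$, the long exact sequence $AP(s)\xrightarrow{v_s}AP(s)\xrightarrow{r}AP(s+1)\xrightarrow{\delta}\cdots$ splits, and via $Q_s=r\delta$ each step grafts on a new free exterior generator corresponding to $Q_s$; together with the $Q_0$-generator coming from $\times p$ this makes $AP(n)^{*,*'}(\tilde\chi_X)$ a free module over $Q(n-1)=\Lambda(Q_0,\dots,Q_{n-1})$. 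Finally one passes to the colimit $\operatorname{colim}_s AP(s)^{*,*'}(\tilde\chi_X)\cong H^{*,*'}(\tilde\chi_X;\mathbb Z/p)$ (the Hopkins--Morel identification invoked in Corollary 8.2): the remaining quotients by $v_n,v_{n+1},\dots$ commute with this exterior action, so the free $Q(n-1)$-structure survives, and $H^{*,*'}(\tilde\chi_X;\mathbb Z/p)$ is $Q(n-1)$-free.

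\textbf{Where the work is.} The first assertion is essentially bookkeeping on top of Lemma 8.5 and the $X$-local contractibility of the Čech object. The real care is needed in the second assertion: Lemma 8.1 and Corollary 8.2 are phrased under the stronger hypothesis that $AP(1)^{*,*'}$ be $P(n)^*$-\emph{free} (so that $v_s$ acts injectively for $s\ge n$ and no $\mathrm{Tor}$-terms contaminate the colimit), and one must either check this freeness for $\tilde\chi_X$ or verify that the $Q(n-1)$-free conclusion persists without it — I expect it does, since the exterior generators $Q_0,\dots,Q_{n-1}$ are produced entirely in the range $0\le s\le n-1$ of the tower, where the $v_s$ vanish, and commute with all later operations. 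A secondary technical point is that $\tilde\chi_X$ is not a compact object (the Čech construction is an infinite colimit), so the identification $H^{*,*'}(\tilde\chi_X;\mathbb Z/p)\cong\operatorname{colim}_s AP(s)^{*,*'}(\tilde\chi_X)$ and the commutation of that colimit with the exact sequences of (2.1) call for a $\lim^1$-type check.
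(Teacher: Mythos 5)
The paper does not actually prove this lemma: it is quoted from [Ya10, Lemmas 6.4, 6.5] with no in-text argument, so there is nothing internal to compare against and your proposal has to stand on its own. Its first half does: since $I(X)=\Img(\pi_*)$ is an ideal, $v_i\in I(X)$ gives a class $\gamma_i$ with $\pi_*\gamma_i=v_i$, the twisted transfer $\pi_*(\gamma_i\cdot\pi^*(-))$ is multiplication by $v_i$ by the projection formula, and it factors through $ABP^{*,*'}(\tilde\chi_X\wedge X_+)=0$. This is exactly the paper's Lemma 8.5 with $[X]$ replaced by $\gamma_i$, it gives the stronger conclusion $I_n\cdot ABP^{*,*'}(\tilde\chi_X)=0$, and --- importantly for what follows --- it applies verbatim to any $ABP$-module theory, in particular to $Ak(i)$ and $AP(s)$.

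The second half has a genuine gap. A local one first: from the exactness of $ABP^{*,*'}(\tilde\chi)\to AP(1)^{*,*'}(\tilde\chi)\to ABP^{*+1,*'}(\tilde\chi)$, vanishing of $v_i$ on both ends only yields $v_i^2=0$ on the middle term, not $v_i=0$; you need exact vanishing for your long exact sequences to split, and you only get it by running the transfer directly on $AP(s)$ (or $Ak(i)$), not by ``feeding through the cofiber sequence.'' More seriously, the passage from $AP(n)^{*,*'}(\tilde\chi)$ to $H^{*,*'}(\tilde\chi;\bZ/p)=\mathrm{colim}_s\,AP(s)^{*,*'}(\tilde\chi)$ is precisely where the argument stops being a proof: for $s\ge n$ the map $v_s$ is in general neither zero nor injective, so each further stage is an extension of $\Ker v_s$ by $\mathrm{coker}\,v_s$, and submodules and quotients of free modules over an exterior algebra need not be free. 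You flag this point yourself but do not resolve it, and it is the crux. The standard (and much shorter) route avoids the $AP(s)$-tower entirely: for each $i\le n-1$ use the single cofibration $Ak(i)\stackrel{v_i}{\to}Ak(i)\stackrel{r}{\to}AH\bZ/p\stackrel{\delta}{\to}\cdots$ of (4.1). The transfer gives $v_i=0$ exactly on $Ak(i)^{*,*'}(\tilde\chi_X)$, so the long exact sequence breaks into short exact sequences
\[ 0\to Ak(i)^{*,*'}(\tilde\chi_X)\stackrel{r}{\to}H^{*,*'}(\tilde\chi_X;\bZ/p)\stackrel{\delta}{\to}Ak(i)^{*+1,*''}(\tilde\chi_X)\to 0 ;\]
then $Q_i=r\delta$ satisfies $\Ker Q_i=\Ker\delta=\Img r=\Img Q_i$, i.e.\ the $Q_i$-Margolis homology vanishes, and the Adams--Margolis criterion for the bounded-below module $H^{*,*'}(\tilde\chi_X;\bZ/p)$ upgrades the simultaneous vanishing for $i=0,\dots,n-1$ to freeness over $\Lambda(Q_0,\dots,Q_{n-1})$.
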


Now we recall the norm variety.
Given   a pure symbol $a$ in the mod $p$  Milnor $K$-theory
$ K_{n}^M(k)/p$, by Rost,  we can
construct the norm variety $V_a$ such that 
\[ \pi_*([V_a])=v_{n-1},\quad a|_{k(V_a)}=0\in K_{n}^M(k(V_a))/p\]
where $[V_a]=1\in \Omega^0(V_a)$,  $\pi:V_a\to pt.$ is the projection and $k(V_a)$ is the function field of $V_a$ over $k$.  
Note $I(V_a)=I_{n}$.

\begin{thm} (For $p=2$, [Ya4])
 Let $0\not =a=(a_0,...,a_n)\in K_{n}^M(k)/p$.  Then 
there is a $K_*^M(k)\otimes Q(n-1)$-modules isomorphism
\[H^{*,*'}(\tilde{\chi}_a;\bZ/p)\cong K_*^M(k)/(Ker(a))\otimes 
Q(n-1)\otimes \bZ/p[\xi_{a}]
\{a'\}\]
where $\xi_{a}=Q_{n-1}....Q_0(a')$ and  $deg(a')=(n,n-1)$.
\end{thm}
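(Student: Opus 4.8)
The goal is to compute $H^{*,*'}(\tilde{\chi}_a;\bZ/p)$ for the reduced \v Cech complex of a norm variety $V_a$ attached to a pure symbol $a=(a_0,\dots,a_n)\in K^M_n(k)/p$. The strategy is to reduce to the known structural results: Lemma 8.7 tells us that, since $I(V_a)=I_n$, the module $H^{*,*'}(\tilde\chi_a;\bZ/p)$ is $Q(n-1)$-free, so as a $Q(n-1)$-module it has the shape $Q(n-1)\otimes B$ for some bigraded $\bZ/p$-vector space $B$. The whole problem then becomes: identify $B$ as $K^M_*(k)/(\mathrm{Ker}(a))\otimes \bZ/p[\xi_a]\{a'\}$, i.e. a free module of rank one over a polynomial ring on the single generator $\xi_a=Q_{n-1}\cdots Q_0(a')$, with coefficients the quotient ring $K^M_*(k)/(\mathrm{Ker}(a))$.

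\textbf{Step 1: locate the class $a'$.} The symbol $a$ lives in $K^M_n(k)/p\cong H^{n,n}(\mathrm{Spec}\,k;\bZ/p)$. One first produces the companion class $a'\in H^{n,n-1}(\tilde\chi_a;\bZ/p)$ (degree $(n,n-1)$ as asserted): this is the standard construction in Voevodsky's theory, where the symbol, which dies on the function field $k(V_a)$, produces a nontrivial class on the reduced \v Cech object one weight down; cf.\ [Vo1,2,4]. I would recall that $\chi_a$ is a form of a point (so $H^{*,*'}(\chi_a)$ is a subring of $H^{*,*'}(\mathrm{Spec}\,k)$) and use the cofiber sequence $\tilde\chi_a\to\chi_a\to\mathrm{Spec}\,k$ to convert the long exact sequence into a description of $H^{*,*'}(\tilde\chi_a;\bZ/p)$ in terms of the kernel/cokernel of the pullback $H^{*,*'}(\mathrm{Spec}\,k)\to H^{*,*'}(\chi_a)$. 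On $\mathrm{Spec}\,k$ the relevant invariant is multiplication by $a$ on $K^M_*(k)/p$, so the cokernel contributes $K^M_*(k)/(\mathrm{Ker}(a))$ and the connecting map produces $a'$ as the "shift" of a generator of that kernel.

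\textbf{Step 2: run the Milnor operations.} By Lemma 8.7 the classes $a', Q_0a', Q_1a', \dots, Q_{n-1}a'$ are all nonzero and $Q(n-1)$-independent; the top one $\xi_a=Q_{n-1}\cdots Q_0 a'$ has weight $w(\xi_a)=w(a')-n=-1$, and since $a'$ has degree $(n,n-1)$, $\xi_a$ has degree $(n+\sum(2p^i-1),\,n-1+\sum(p^i-1))$ for $i=0,\dots,n-1$. The key point is that $\xi_a$ survives as a polynomial generator: because $V_a$ is a norm variety, multiplication by $\xi_a$ on $H^{*,*'}(\tilde\chi_a;\bZ/p)$ is injective and there are no further relations, so $B$ is free over $\bZ/p[\xi_a]$ on the class $a'$. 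This is the arithmetic heart of Rost's construction of norm varieties and is where one invokes the resolution of the Bloch--Kato conjecture (Voevodsky--Rost): the "norm variety" property is exactly what forces $\xi_a$ to be a nonzero-divisor with the polynomial subalgebra $\bZ/p[\xi_a]$ acting freely.

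\textbf{Step 3: assemble and handle the coefficients.} Combining Steps 1--2, as a $K^M_*(k)\otimes Q(n-1)$-module we get
\[
H^{*,*'}(\tilde\chi_a;\bZ/p)\cong K^M_*(k)/(\mathrm{Ker}(a))\otimes Q(n-1)\otimes \bZ/p[\xi_a]\{a'\},
\]
where the $K^M_*(k)$-action factors through $K^M_*(k)/(\mathrm{Ker}(a))$ precisely because $a'$ is annihilated by $\mathrm{Ker}(a)$ (the annihilator of the original symbol). The case $p=2$ is [Ya4], so the work is really checking that the odd-$p$ arguments go through verbatim; the Milnor-operation bookkeeping is insensitive to the prime, and the only place parity could intrude is sign conventions in $Q_iQ_j=-Q_jQ_i$, which affects nothing in a statement about module structure. \textbf{The main obstacle} I anticipate is Step 2: proving that $\xi_a$ is genuinely polynomial (no truncation $\xi_a^N=0$) rather than merely nonzero. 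This is not formal — it uses the full strength of the norm-variety machinery and the structure of the motivic cohomology of $\chi_a$ — and it is the step where one cannot avoid citing the deep input from [Vo1,2,4] and Rost's theory rather than arguing from the spectral sequences alone.
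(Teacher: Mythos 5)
First, a point of comparison: the paper does not actually prove this theorem. It is quoted as a known result, with the $p=2$ case attributed to [Ya4] and the general case resting on [Ya10] and the Voevodsky--Rost theory of $\tilde\chi_a$ and the Rost motive. So there is no in-paper argument to measure your proposal against; it has to be judged on its own terms.

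On those terms there is a genuine gap, and it sits exactly where you flag it. Your Step 2 asserts that multiplication by $\xi_a$ is injective, that there are no further relations, and that $B$ is free over $\bZ/p[\xi_a]$ on $a'$, justified only by the phrase ``the norm-variety property is exactly what forces $\xi_a$ to be a nonzero-divisor.'' That sentence \emph{is} the theorem, not a proof of it: Lemma 8.7 only gives $H^{*,*'}(\tilde\chi_a;\bZ/p)\cong Q(n-1)\otimes B$ for some unidentified $B$, and nothing you write pins $B$ down --- neither the injectivity of $\xi_a$, nor the completeness of the generating set $\{\xi_a^{j}a'\cdot\alpha\}$, nor the absence of other classes. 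In the actual arguments these points come from the distinguished triangles relating $\tilde\chi_a$ to the Rost motive $M_a$ and its Tate twists (Rost's special correspondences), an induction on weight using the vanishing range of motivic cohomology, and Margolis-homology computations; none of this is carried out or even named precisely. There is also a false assertion in Step 1: $H^{*,*'}(\chi_a;\bZ/p)$ is \emph{not} a subring of $H^{*,*'}(\mathrm{Spec}(k);\bZ/p)$. The point has no mod $p$ motivic cohomology in bidegrees with $*>*'$, whereas $\chi_a$ does (this is the whole content of Voevodsky's construction of the class $\delta$); if your containment held, the long exact sequence of the cofibration would force $H^{*,*'}(\tilde\chi_a;\bZ/p)$ to be a shifted subquotient of $K_*^M(k)/p\otimes\bZ/p[\tau]$ and it could never contain the infinite family $\xi_a^{j}a'$ asserted by the theorem. (A minor slip: with the paper's convention $w=2*'-*$ one gets $w(a')=n-2$ and $w(\xi_a)=-2$, not $-1$.) In short, your proposal is a reasonable table of contents for a proof, but the load-bearing step is cited rather than proved, and the one structural claim you do make about $\chi_a$ is incorrect.
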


Thus from Lemma 8.3 and Lemma 3.1,  we see 
\begin{cor}
We have
\[ 
AK(s)^{*,*'}(\tilde{\chi}_a)\cong
\begin{cases}
K(n)^*\otimes 
H^{*,*'}(\tilde{\chi}_a;\bZ/p)
\quad s=n\\
0\quad 1\le s<n.
\end{cases}\] 
\end{cor}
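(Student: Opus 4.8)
The plan is to combine Theorem 8.4 with the structural Lemma 8.3 and then feed the result into Lemma 3.1 and its corollaries. First I would observe that Theorem 8.4 tells us, up to the choice of $\bZ/p$-module $B_n$, that the motivic cohomology of $\tilde\chi_a$ has exactly the shape
\[
H^{*,*'}(\tilde\chi_a;\bZ/p)\cong Q(n-1)\otimes B_n,
\]
where $B_n = K_*^M(k)/(\mathrm{Ker}(a))\otimes \bZ/p[\xi_a]\{a'\}$ (one reads off that $a'$ generates freely over $Q(n-1)$, and $\xi_a=Q_{n-1}\cdots Q_0(a')$ is the top Milnor operation applied to $a'$, so no further $Q_i$-divisibility is possible). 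This is precisely the hypothesis of Lemma 8.5 with this $B_n$.

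Next I would apply Lemma 8.5 to conclude $P(1)^*(\tilde\chi_a)\cong P(n)^*\otimes B_1$, where $B_1=Q_1\cdots Q_{n-1}B_n$; equivalently, $P(1)^*(\tilde\chi_a)$ is an $I_n$-torsion, $P(n)^*$-free module — which is consistent with Lemma 8.2 and Lemma 8.3 since $I(V_a)=I_n$. Then Corollary 8.6 (applied with $X=\tilde\chi_a$) gives immediately $K(n)^*(\tilde\chi_a)\cong K(n)^*\otimes B_n$ and $K(s)^*(\tilde\chi_a)\cong 0$ for $s<n$. Here one should be slightly careful: Corollary 8.6 as stated produces $K(n)^*\otimes B_n$ with $B_n$ the module appearing in the hypothesis $P(1)^*(X)\cong P(n)^*\otimes B_n$; tracing through Lemma 8.5 and Corollary 8.2, the relevant $B_n$ is indeed the one in Theorem 8.4, so $K(n)^*(\tilde\chi_a)\cong K(n)^*\otimes H^{*,*'}(\tilde\chi_a;\bZ/p)$.

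Finally I would transfer this from the oriented/topological-style statements to the algebraic Morava $K$-theory $AK(s)^{*,*'}$. The point is that all of the exact sequences (2.1), (3.1), (4.1) used in $\S 8$ hold verbatim in the algebraic setting, as the excerpt repeatedly notes, and Lemma 3.1 (together with Lemma 3.3 and Lemma 8.3) identifies $AP(n)^{2*,*}$ and $AK(n)^{2*,*}$ with the oriented quotients; moreover the motivic cohomology computation of Theorem 8.4 is itself the algebraic input, and the proof of Lemma 8.5 is stated to go through for algebraic theories via the AHss of Theorem 2.1. Putting these together yields
\[
AK(s)^{*,*'}(\tilde\chi_a)\cong
\begin{cases}
K(n)^*\otimes H^{*,*'}(\tilde\chi_a;\bZ/p) & s=n,\\
0 & 1\le s<n,
\end{cases}
\]
which is the claim. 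The main obstacle I expect is the bookkeeping in the second step: verifying that the $B_n$ produced by the chain Lemma 8.5 $\to$ Corollary 8.2 $\to$ Corollary 8.6 really matches the explicit module in Theorem 8.4 (in particular that the $\bZ/p[\xi_a]$-polynomial part, coming from iterated $Q_i^{-1}$, is accounted for correctly), and checking that the degree hypotheses in Lemma 8.5 are met so that the differentials $d_{2p^i-1}(x)=v_i\otimes Q_i x$ are the first non-zero ones; the algebraic-versus-topological translation is then routine given the earlier lemmas.
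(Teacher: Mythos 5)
Your proposal is correct and follows essentially the same route as the paper, which deduces the corollary in one line from the motivic cohomology computation of $\tilde{\chi}_a$ (showing it is $Q(n-1)$-free), the AHss lemma producing $P(1)^*(\tilde\chi_a)\cong P(n)^*\otimes B_1$, the structural lemma giving $K(n)^*\otimes Q(n-1)\otimes B_n$ and the vanishing for $s<n$, and Lemma 3.1 for the passage to the algebraic theories. Your remark about reconciling the $B_n$ of the hypothesis with the $Q(n-1)\otimes B_n\cong H^{*,*'}(\tilde\chi_a;\bZ/p)$ appearing in the conclusion is exactly the (purely notational) point one must check, and you resolve it correctly.
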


///

{\bf Remark.}
For details of etale cohomologies of
$\chi _a$ and $M_a$ are studied 
in $\S 8,9$ in [Ya 10].
For an  example
\[ H^{*,*'}(M_a;\bZ/2)\cong H^{*,*'}(\chi_a;\bZ/p)\quad for *<2(p^{n-2}-1)/(p-1).\]

\section{ The motivic cohomology of quadrics over $\bR$ with coefficients $\bZ/2$}

Let $X$ be a smooth variety over the field
 $\bR$ of real numbers,
and we consider the cohomologies of $\bZ/2$ coefficients.
In this paper the $mod(2)$ \'etale cohomology means the 
motivic cohomology of the same first and the second degrees
$ H_{\acute{e}t}^*(X;\bZ/2)\cong H^{*,*}(X.\bZ/2)$.

It is well known ([Vo1], [Vo2])
\[ H_{\acute{e}t}^*(Spec(\bC);\bZ/2)\cong \bZ/2, \quad 
 H^{*,*'}(Spec(\bC);\bZ/2)\cong \bZ/2[\tau], \]
\[ H_{\acute{e}t}^*(Spec(\bR);\bZ/2)\cong \bZ/2[\rho], \quad 
 H^{*,*'}(Spec(\bR);\bZ/2)\cong \bZ/2[\tau, \rho] \]
where $0\not=\tau\in H^{0,1}(Spec(\bR);\bZ/2)\cong \bZ/2$ and
where 
\[\rho=-1\in \bR^*/(\bR^*)^2\cong K_1^M(\bR)/2\cong H_{\acute{e}t}^1(Spec(\bR);\bZ/2).\]

We recall the cycle map from the Chow ring to the \'etale cohomology
\[cl/2: CH^*(X)/2\to H^{2*}_{\acute{e}t}(X;\bZ/2).
\]
This map is also written as
$ H^{2*,*}(X;\bZ/2)\stackrel{\times \tau^*}{\to}
H^{2*,2*}(X;\bZ/2).$

Let $X=Q^d$ be an anisotropic quadric of dimension $2^n-1$
(i.e. the norm variety for $\rho^{n+1}\in K_{n+1}^M(\bR)/2$).  Then we have the Rost motive  $M\subset Q^d$ [Ro].    
 It is known ( the remark in  page 575 in [Ya2])
\[ H_{\acute{e}t}^{*}(M;\bZ/2)\cong \bZ/2[\rho]/(\rho^{2^{n+1}-1})
\cong \bZ/2\{1,\rho,\rho^2,...,\rho^{2^{n+1}-2}\}.\]

The 
Chow ring is also known [Ro]
\[CH^*(M
)/2\cong \bZ/2\{1,c_0,c_1....,c_{n-1}\}, 
\quad cl(c_i)=\rho^{2^{n+1}-2^{i+1}}.\]
The cycle map $cl/2$ is injective.
The elements $c_i$ is also written as 
\[ c_i=\rho^{2^{n+1}-2^{i+1} }
\tau^{-2^n+2^{i}}  
\quad  in \ CH^*(M)/2\subset H_{\acute{e}t}^{2*}(M:\bZ/2)[\tau^{-1}]. \]

The mod(2) motivic cohomology is known 
(Theorem 5.3 in \cite{Ya2}).

\begin{thm} (Theorem 5.3 in \cite{Ya2}, [Ya3])
  The cohomology 
$ H^{*,*'}(M_n;\bZ/2)$ is isomorphic to the  $\bZ/2[\rho,\tau]$-subalgebra of
\[ \bZ/2[\rho,\tau,\tau^{-1}]/(\rho^{2^{n+1}-1}) \]
generated by  \ \ $ a=\rho^{n+1} ,\  \ a'=a\tau^{-1},\  $  and \ elements in $  \Lambda (Q_0,...,Q_{n-1})\{a'\}.   $
\end{thm}
\begin{lemma}   We have 
$Q_0(\tau^{-1})=\rho \tau ^{-2}$.  Hence  $Q_0(a')=\rho a\tau^{-2}, $  while $Q_0(a)=0$.
Similarly, we see $Q_1(\tau^{-2})=\rho^3\tau^{-4}$, and 
$Q_1(a')=\rho^6\tau^{-3}$.
\end{lemma}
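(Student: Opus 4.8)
The plan is to reduce both assertions to the single structural input $Q_0(\tau)=\rho$ over $\bR$ (Voevodsky's computation of the integral Bockstein, [Vo1]), together with $Q_0(\rho)=0$, $P^1(\rho)=0$, the derivation property of $Q_0$, and the \emph{motivic} Cartan formula for $P^1=Sq^2$. First I would dispose of the $Q_0$-statements. Since $Q_0$ is a derivation, applying it to $1=\tau\cdot\tau^{-1}$ gives $0=\rho\tau^{-1}+\tau\,Q_0(\tau^{-1})$, whence
\[ Q_0(\tau^{-1})=\rho\tau^{-2},\qquad\text{and more generally } Q_0(\tau^{j})=j\,\rho\,\tau^{j-1}, \]
which is nonzero precisely when $j$ is odd. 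As $a=\rho^{n+1}$ and $Q_0(\rho)=0$, the derivation property yields $Q_0(a)=0$, and then $Q_0(a')=Q_0(a\tau^{-1})=a\,Q_0(\tau^{-1})=\rho\,a\,\tau^{-2}$.

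For the $Q_1$-statements I would use the defining commutator $Q_1=Q_0P^1+P^1Q_0$ (with $P^1=Sq^2$ at $p=2$), the one essential ingredient being the motivic Cartan formula, whose $P^1$-part carries a weight-correcting factor of $\tau$ on the cross term:
\[ P^1(uv)=P^1(u)\,v+u\,P^1(v)+\tau\,Q_0(u)\,Q_0(v). \]
Taking $u=v=\tau^{-1}$, the two $P^1(\tau^{-1})$-terms cancel mod $2$ and only the twisted cross term survives:
\[ P^1(\tau^{-2})=\tau\bigl(Q_0(\tau^{-1})\bigr)^2=\tau(\rho\tau^{-2})^2=\rho^2\tau^{-3}. \]
The crucial phenomenon is that the factor $\tau$ converts the naive even power $\tau^{-4}$ into the odd power $\tau^{-3}$. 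Since $Q_0(\tau^{-2})=0$ (a square), I then obtain
\[ Q_1(\tau^{-2})=Q_0P^1(\tau^{-2})=Q_0(\rho^2\tau^{-3})=\rho^2\cdot\rho\tau^{-4}=\rho^3\tau^{-4}, \]
using $Q_0(\tau^{-3})=\rho\tau^{-4}$ from the odd-power formula above.

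For $Q_1(a')$ I would run the same computation on $a'=\rho^{n+1}\tau^{-1}$. One has $Q_0(a')=\rho^{n+2}\tau^{-2}$; using $P^1(\rho)=0$ (so that $P^1(\rho^{m})=0$ for all $m$ by the Cartan formula) the term $P^1Q_0(a')=\rho^{n+2}P^1(\tau^{-2})=\rho^{n+4}\tau^{-3}$, while the remaining contribution $Q_0P^1(a')$ involves only the even power $\tau^{-2}$ and is therefore killed by $Q_0$. Hence $Q_1(a')=\rho^{n+4}\tau^{-3}$, which is $\rho^6\tau^{-3}$ for $a=\rho^3$.

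The hard part — in fact the whole content of the lemma — is precisely that one may not apply the ordinary topological Cartan/Leibniz rule to the $\tau$-inverted classes: if one did, $Q_1$ would be an honest derivation and would force $Q_1(\tau^{-2})=Q_1\bigl((\tau^{-1})^2\bigr)=0$, contradicting the stated value. It is exactly the $\tau$ in the motivic Cartan formula that breaks the derivation property and manufactures the odd $\tau$-exponent needed for the final $Q_0$ to act nontrivially. I should also record a convenient robustness point: the (a priori ambiguous) value of $P^1(\tau)\in H^{2,2}$ never affects the answer, since it enters only through even powers of $\tau$ that are annihilated by $Q_0$ or cancels mod $2$ in the cross-term computation, so both displayed $Q_1$-values are independent of that choice.
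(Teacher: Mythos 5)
Your proof is correct, and for the one equation the paper actually proves it coincides with the paper's argument: the paper's entire proof is the single line $0=Q_0(1)=Q_0(\tau\tau^{-1})=\rho\tau^{-1}+\tau Q_0(\tau^{-1})$, which is exactly your first step. Everything else in the lemma is dismissed there with ``Hence'' and ``Similarly'', so your treatment of the $Q_1$-statements is genuinely added content rather than a retracing. Where the paper (implicitly, via its later Remark citing Lemma 6.2 of [Ya9]) would quote the known value $Q_1(\tau^2)=\rho^3$ and run the same division trick on $1=\tau^2\cdot\tau^{-2}$ --- legitimate there because the $\rho$-correction in the motivic Leibniz rule $Q_1(xy)=Q_1(x)y+xQ_1(y)+\rho\,Q_0(x)Q_0(y)$ vanishes on squares --- you instead compute everything from scratch out of the $\tau$-twisted Cartan formula for $Sq^2$, which is self-contained and also explains \emph{why} $Q_1$ fails to be a derivation over $\bR$; your diagnosis that this failure is the real content of the $Q_1$-claims is exactly right, and your computations ($P^1(\tau^{-2})=\rho^2\tau^{-3}$, hence $Q_1(\tau^{-2})=\rho^3\tau^{-4}$ and $Q_1(a')=\rho^{n+4}\tau^{-3}=\rho^6\tau^{-3}$ for $n=2$) check out and agree with the paper's route. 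Two small points you should make explicit: (i) the identity $Q_1=Q_0P^1+P^1Q_0$ for Voevodsky's motivic $Q_1$ needs a word of justification over a base with $\rho\neq 0$, since for higher $Q_i$ the commutator formula acquires $\rho$-correction terms; for $Q_1$ it does hold, e.g.\ because the motivic Steenrod algebra over $\bR$ in bidegree $(3,1)$ is spanned by $Q_1$ and $Sq^3$ and topological realization separates them; (ii) the inputs $Q_0(\rho)=0$ and $P^1(\rho)=0$ follow from the vanishing $H^{p,q}(\mathrm{Spec}\,\bR;\bZ/2)=0$ for $p>q$, which is worth stating since you use them repeatedly.
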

\begin{proof}
We see the first equation from
\[ 0=Q_0(1)=Q_0(\tau \tau^{-1})=\rho \tau^{-1}+\tau Q_0(\tau^{-1}).\]
\end{proof}

%\begin{lemma} (Lemma 5.13 in \cite{Ya2}
%)   Let $X_d$ be anisotropic quadric of the degree $d$.
%Let $h\in H^{2,1}(X_d)$ be the hyper plain section.
%If $2^n-2<d$, then we have a graded ring isomorphism
%\[H^{*,*'}(X_d;\bZ/2)\cong \bZ/2[\rho,\tau,h]\quad 
%when \ \ *\le n.\]
%In particular, $H^{*,*-1}(X_d:\bZ/2)=0$ 
%$mod(Ideal(h))$ for $*\le n$.
%\end{lemma}

{\bf Example.}  $X=M_2$.
Recall that $\tau : H^{*,*'}(X;\bZ/2)
\to H^{*,*'+1}(X;\bZ/2)$ is injective.
The motivic cohomology is given as
(Example 5.15 in [Ya9])
\[ H^{*,*'}(X;\bZ/2)\cong 
\bZ/2[\tau]\{1,\rho,\rho^2, a'=\rho^3\tau^{-1}, \qquad \qquad \]
\[ \qquad  \qquad \qquad Q_0a'=\rho^4\tau^{-2},
    \rho Q_0a'=\rho^5\tau^{-2},
    Q_1a'=\rho^6\tau^{-3}\}.\] 

We consider AHss for $s=1$ or $2$
\[ E_2\cong H^{*,*'}(X;\bZ/2)\otimes P(s)^*\Longrightarrow AP(s)^{*,*'}(X). \] 

We first consider the case $s=2$.  The 
first  degree of the differential
are ( see Theorem 2.1)
\[ -|v_2|+1>6= first\ degree\ of H^{2*,*}(X:\bZ/2).\]
Hence all differentials 
(to $CH^*(X)/2$) are zero.

Next, we consider the case $s=1$
We note 
\[d_3(a')=Q_1a'=\rho^6\tau^{-3}.\]
Then we see
\begin{thm}  Let $X=M_2$.
\[ AP(s)^{2*,*}(X)\cong
\begin{cases}
P(1)^*\{1,Q_0a'\}\oplus P(2)^{*}\{{Q_1}a'\}\quad s=1 \\
P(s)^*\{1,Q_0a',Q_1a'\}\quad s\ge 2.
\end{cases} \]
\end{thm}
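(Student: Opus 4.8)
The plan is to run the motivic Atiyah--Hirzebruch spectral sequence of Theorem 2.1 for $Ah = AP(s)$ and read off $AP(s)^{2*,*}(X)$ from the infinite term, using the explicit structure of $H^{*,*'}(M_2;\bZ/2)$ recorded in the Example above and the action of $Q_0,Q_1$ from Lemma 9.2. First I would treat the case $s\ge 2$. Here the point is purely numerical: the first possible nonzero differential is $d_{2p^s-1}$, which raises the first degree by $2p^s-1 = 2\cdot 2^s - 1 \ge 7$, while the relevant source classes all sit in first degree $\le 6$ (the generators of $H^{*,*'}(X;\bZ/2)$ listed are $1,\rho,\rho^2,a',Q_0a',\rho Q_0a',Q_1a'$, of first degrees $0,1,2,3,4,5,6$). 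Moreover, by (2.3), on a smooth $X$ there are no targets for differentials out of $E_2^{2*,*,0}\cong CH^*(X)/2$, so the part of the $E_2$-term mapping onto $AP(s)^{2*,*}$ consists of permanent cycles. This gives $AP(s)^{2*,*}(X)\cong P(s)^*\{1,Q_0a',Q_1a'\}$ for $s\ge 2$ (the elements $\rho,\rho^2,\rho Q_0a'$ do not contribute to the $(2*,*)$-line since they have $w\ge 1$; cf.\ (4.2)).

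For $s=1$ the genuine content appears. The first nonzero differential is $d_3$, given on the generating classes by $d_3(x)=v_1 Q_1(x)$. By Lemma 9.2, $Q_1(a')=\rho^6\tau^{-3}\ne 0$, so $d_3(a') = v_1\otimes Q_1 a'$ kills both $a'$ in $E_3$ and the class $v_1\cdot Q_1a'$, and I must check that $Q_1$ annihilates the remaining generators: $Q_1(1)=0$, $Q_1(\rho)=Q_1(\rho^2)=0$ (as $\rho\in H^{1,1}$ lies in the image of \'etale cohomology and $Q_1$ lowers weight), $Q_1(Q_0a')=-Q_0(Q_1a')$ which I expect to vanish by degree/the relation $\rho^{2^{n+1}-1}=0$ with $n=2$, and $Q_1(Q_1a')=0$ since $Q_1^2=0$. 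After $d_3$ the surviving even-weight classes are $1$, $Q_0a'$, and $Q_1a'$; crucially $Q_1a'$ is now only a $P(2)^*$-module generator, because its $v_1$-multiple was hit by $d_3(a')$. No further differentials can act on these (again by (2.3) for the $(2*,*,0)$ part and by degree for higher $d_{2p^i-1}$). Hence $AP(1)^{2*,*}(X)\cong P(1)^*\{1,Q_0a'\}\oplus P(2)^*\{Q_1a'\}$.

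The main obstacle I anticipate is the bookkeeping of \emph{exactly which classes lie on the $(2*,*)$-line} and survive: one must verify that after $d_3$ there are no additional contributions to $AP(1)^{2*,*}$ coming from classes of positive weight whose weight is lowered into range by the differentials, and that the extension problem is trivial, i.e.\ that the associated graded $P(2)^*\{Q_1a'\}$ is not glued to $P(1)^*\{1,Q_0a'\}$ by a hidden $v_1$-extension (if it were, $v_1 Q_1a'$ would be a nonzero class of the wrong degree). This is handled by the same mechanism as in the preceding examples ($\S 5$, $\S 6$): since the differential $d_3 = v_1\otimes Q_1$ is the \emph{only} way $v_1$ interacts with the top class and it already acts as $d_3(a')$, there is no room for a further $v_1$-extension, and one concludes $Q_1a'$ generates a free $P(2)^*$-summand. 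The $s\ge 2$ case needs no such argument, only the degree count. Combining the two cases gives the stated formula.
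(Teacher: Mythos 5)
Your proposal is correct and follows essentially the same route as the paper: the Atiyah--Hirzebruch spectral sequence of Theorem 2.1, a first-degree count ($-|v_s|+1\ge 7>6$) to kill all differentials for $s\ge 2$, and the single differential $d_3(a')=v_1\otimes Q_1a'$ with $Q_1a'=\rho^6\tau^{-3}$ from Lemma 9.2 to produce the $P(2)^*$-summand on $Q_1a'$ for $s=1$. The extra checks you flag ($Q_1Q_0a'=\rho^7\tau^{-4}=0$ by the relation $\rho^{2^{n+1}-1}=0$, and the absence of a hidden $v_1$-extension because $H^{m,*'}(M_2;\bZ/2)=0$ for $m\ge 7$) go through and merely make explicit what the paper leaves implicit.
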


Recall $Q^3$ is the norm variety $M_2
\subset Q^3$. We have the decomposition of the motives
$M(Q^3)\cong M_2\oplus M_1\otimes  \bT$ (see $\S 8$ in [Ya9]).
 The Chow ring is written
\[ CH^*(Q^3)\cong \bZ_2\{1,h,h^2,h^3\}\oplus \bZ/2\{c\}.\]
Here $h\in CH^1(\bT)$, $ h^2\in CH^2(M_1)$,   $h^3=Q_1a'$ and  $c=Q_0a'$.  Hence 
\begin{cor}
For the norm variety $Q^3$ over $\bR$, we have 
\[AP(1)^{2*,*}(Q^3)\cong
P(1)^*\{1,h,h^2,Q_0(a')\}
\oplus 
P(2)^*\{h^3\}.\]
\[AK(1)^{2*,*}(Q^3)\cong
K(1)^*\{1,h,h^2,Q_0(a')\}
.\]
\end{cor}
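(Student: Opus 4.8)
The plan is to reduce the corollary to the computation of $AP(1)^{2*,*}(M_2)$ already obtained above, using the motivic decomposition $M(Q^3)\cong M_2\oplus M_1\otimes\bT$ recalled earlier ($\S 8$ in [Ya9]). Since $Q^3$ is smooth, the basic lemma ($OP(n)^{2*}(X)\cong AP(n)^{2*,*}(X)$ for smooth $X$) identifies $AP(1)^{2*,*}(Q^3)$ with the oriented theory $\Omega^*(Q^3)/I_1=\Omega^*(Q^3)/(2)$; as oriented theories carry projective pushforwards, Chow correspondences act on them, so a decomposition of the Chow motive of $Q^3$ induces a direct sum decomposition of $\Omega^*(Q^3)$ and hence of its reduction $AP(1)^{2*,*}(-)$, with the Tate object $\bT$ shifting the bidegree by $(2,1)$. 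Therefore
\[ AP(1)^{2*,*}(Q^3)\ \cong\ AP(1)^{2*,*}(M_2)\ \oplus\ AP(1)^{2*-2,*-1}(M_1). \]

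First I would treat the summand $M_1\otimes\bT$. It accounts exactly for the Chow classes $h\in CH^1(Q^3)$ and $h^2\in CH^2(Q^3)$, which are powers of the hyperplane class $h=c_1$, hence Chern classes, hence nonzero permanent cycles in the AHss. Moreover the Chow groups of $M_1\otimes\bT$ are torsion free and supported only in degrees $1$ and $2$, so the surjection $P(1)^*\{h,h^2\}\twoheadrightarrow AP(1)^{2*,*}(M_1\otimes\bT)$ provided by (2.4) cannot drop rank (nothing can kill $h$ or $h^2$), i.e.\ the AHss on this summand collapses. Thus $AP(1)^{2*,*}(M_1\otimes\bT)\cong P(1)^*\{h,h^2\}$, free of rank two over $P(1)^*$.

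For the summand $M_2$ I would quote the preceding theorem in the case $X=M_2$, $s=1$, namely $AP(1)^{2*,*}(M_2)\cong P(1)^*\{1,Q_0a'\}\oplus P(2)^*\{Q_1a'\}$, and use the identification $c=Q_0a'\in CH^2(M_2)$, $h^3=Q_1a'\in CH^3(M_2)$ coming from the description of $CH^*(Q^3)$. Combining with the previous paragraph yields $AP(1)^{2*,*}(Q^3)\cong P(1)^*\{1,h,h^2,Q_0(a')\}\oplus P(2)^*\{h^3\}$, the first assertion. For the second, apply $K(1)^*\otimes_{BP^*}(-)$, which by the basic lemma (or by the definition of $AK(n)$) computes $AK(1)^{2*,*}(Q^3)$: on the $P(1)^*$-free summand this returns $K(1)^*\{1,h,h^2,Q_0(a')\}$, while the last summand vanishes since $P(2)^*\cong P(1)^*/(v_1)$ as a $P(1)^*$-module and $v_1$ is invertible in $K(1)^*=\bZ/2[v_1,v_1^{-1}]$, so $K(1)^*\otimes_{P(1)^*}P(2)^*=K(1)^*/(v_1)=0$. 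Hence $AK(1)^{2*,*}(Q^3)\cong K(1)^*\{1,h,h^2,Q_0(a')\}$.

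Given the preceding theorem, I do not expect a genuine obstacle: the corollary is essentially a formal consequence. The two points needing attention are the collapse of the AHss on the Tate-twisted summand — immediate, since its Chow groups are torsion free, concentrated in degrees $1,2$, and Chern-class generated — and the correct matching, imported from [Ya9], of the Chow-ring generators $h,h^2,h^3,c$ of $Q^3$ with $1_{M_1}\otimes[\bT]$, $h_{M_1}\otimes[\bT]$, $Q_1a'$, $Q_0a'$ across the motivic splitting; these, together with the elementary change of rings $K(1)^*\otimes_{P(1)^*}P(2)^*=0$, give the result.
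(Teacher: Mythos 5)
Your proposal is correct and follows essentially the same route as the paper: the paper likewise deduces the corollary from the motivic decomposition $M(Q^3)\cong M_2\oplus M_1\otimes\bT$, Theorem 9.3 for the $M_2$ summand, and the identification of the Chow-ring generators $h,h^2,h^3=Q_1a',c=Q_0a'$, with the $K(1)$ statement following by the change of rings killing the $P(2)^*$ summand. You merely spell out the details (collapse on the Tate-twisted summand, $K(1)^*\otimes_{P(1)^*}P(2)^*=0$) that the paper leaves implicit after its ``Hence''.
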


Remark that (Lemma 6.2 in [Ya9]))
we know
$Q_1(\tau^2)=\rho^3$.
However this element does note related
elements of $w(x)=0$ (In fact $w(\tau)=2$).

\begin{thm}  Let $X=M_n$.
\[ AP(s)^{2*,*}(X)
\begin{cases} \supset 
P(n)^*\{Q_1...Q_{n-1}a'
\} \quad s=n-1
\\ \cong 
P(s)^* \otimes CH^*(X)^*\}\quad s\ge n
\end{cases} \]
\end{thm}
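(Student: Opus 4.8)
The plan is to compute the Atiyah--Hirzebruch spectral sequence for $AP(s)^{*,*'}(M_n)$ starting from Theorem 9.3, which gives $H^{*,*'}(M_n;\bZ/2)$ as a $\bZ/2[\rho,\tau]$-subalgebra with distinguished generators $a=\rho^{n+1}$, $a'=a\tau^{-1}$, and the $\Lambda(Q_0,\dots,Q_{n-1})$-module on $a'$. First I would identify which generators of $H^{2*,*}(M_n;\bZ/2)\cong CH^*(M_n)/2$ have weight zero: by the description $c_i=\rho^{2^{n+1}-2^{i+1}}\tau^{-2^n+2^i}$ before Theorem 9.3 (and the calculation $h^3 = Q_1\cdots Q_{n-1}a'$ type identities visible in the $M_2$ example), the top Chow class is $Q_1\cdots Q_{n-1}a'$, which sits in bidegree making $w=0$. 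This is the element that will survive into $AP(s)^{2*,*}(X)$ for all $s\ge n-1$, and it is the only ``deep'' Morava-type class.

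Next I would run the differentials. As in the $M_2$ example and Lemma 8.5, the first nonzero differential in the AHss for $AP(s)$ is $d_{2p^i-1}(x)=v_i\otimes Q_i x$ (here $p=2$), and by the naturality/dimension arguments of \S4 (recall $w(d_r)=-1$), the relevant differential hitting the weight-zero part is governed by $Q_s$ acting on the $\Lambda(Q_0,\dots,Q_{n-1})\{a'\}$ summand. For $s\ge n$, the degree of $v_s$ is large enough (the first degree of the differential exceeds the top degree of $CH^*(M_n)/2$, exactly as in the $s=2$ case of the $M_2$ example) so \emph{all} differentials into $CH^*(X)/2$ vanish; combined with (2.4)--(2.5) and the fact that every $\tau^i$ is a permanent cycle over $\bR$ (as in Theorem 6.3), this forces $AP(s)^{2*,*}(X)\cong P(s)^*\otimes CH^*(X)$, the claimed isomorphism for $s\ge n$. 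For $s=n-1$, the differential $d_{2^{n-1}-1}$ (and its successors) kill all the $\Lambda(Q_0,\dots,Q_{n-1})\{a'\}$ classes except $Q_1\cdots Q_{n-1}a'$, which is in the kernel of every $Q_i$ (since $Q_i^2=0$) and carries weight zero; so $AP(n-1)^{2*,*}(X)$ \emph{contains} the $P(n)^*$-module on $Q_1\cdots Q_{n-1}a'$, which is exactly the inclusion asserted. (Only ``$\supset$'' is claimed for $s=n-1$ because the other, positive-weight, summands of $AP(n-1)^{2*,*}$ are not being computed here.)

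The main obstacle I expect is pinning down precisely which monomial in the $Q_i$'s applied to $a'$ lands in weight zero, and checking that the higher differentials $d_r$ for $r$ between the first nonzero one and stabilization do not interfere with the weight-zero line---i.e., that $Q_1\cdots Q_{n-1}a'$ is genuinely a permanent cycle and is not itself a boundary. For this I would lean on the bookkeeping in \S4: every differential lowers weight by one, so a weight-zero class can only be hit from a weight-one class, and by (2.3) (vanishing of motivic cohomology for $2*'<*$, hence $E_r^{m,n,2n'}=0$ for $m>2n$ when $X$ is smooth---$M_n$ is the Rost motive inside a smooth quadric, so the same dimension bound applies) the relevant higher $E_r$ entries vanish. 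Assembling these, together with Lemma 3.1 to pass between $OP(n)^*$ and $AP(n)^{2*,*}$, gives the stated theorem; the $s=n$ edge case is the cleanest and the $s=n-1$ ``$\supset$'' is deliberately weak, matching the level of detail in the $M_2$ corollary above.
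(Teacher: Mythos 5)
Your overall strategy --- run the AHss of Theorem 2.1 on the motivic cohomology of Theorem 9.3, use $w(d_r)=-1$ together with the bound on the first degree to control the weight-zero line, and identify the Chow generators with the monomials $Q_0\cdots\hat Q_i\cdots Q_{n-1}(a')$ --- is the same as the paper's (which gives no written proof beyond the $M_2$ template that you are generalizing). The $s\ge n$ half is fine: every differential $v_j\otimes Q_j$ with $j\ge s\ge n$ raises the first degree by $2^{j+1}-1$, which exceeds $2^{n+1}-2$, the top first degree occurring in $H^{*,*'}(M_n;\bZ/2)$, so $E_2=E_\infty$ and the weight-zero part is $P(s)^*\otimes CH^*(X)/2$.

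However, your $s=n-1$ argument has the kernel/image roles of $Q_{n-1}$ reversed, and as written it would prove the wrong module structure. You justify the $P(n)^*$-summand on $Q_1\cdots Q_{n-1}a'$ by saying this class ``is in the kernel of every $Q_i$'' and therefore survives; but mere survival would produce a $P(n-1)^*$-\emph{free} summand, which is false. What makes the summand a $P(n)^*$-module, i.e.\ $v_{n-1}$-torsion, is that $Q_1\cdots Q_{n-1}a'$ lies in the \emph{image} of $Q_{n-1}$: the first differential $d_{2^n-1}$ (note the index is $2p^{n-1}-1=2^n-1$, not $2^{n-1}-1$) gives $d_{2^n-1}(Q_1\cdots Q_{n-2}a')=v_{n-1}\otimes Q_{n-1}Q_1\cdots Q_{n-2}a'=\pm\, v_{n-1}\otimes Q_1\cdots Q_{n-1}a'$, exactly as $d_3(a')=v_1\otimes Q_1a'$ produces the $P(2)^*\{Q_1a'\}$ summand in Theorem 9.6; the remaining differentials of $AP(n-1)$ involve $Q_j$ for $j\ge n$ and vanish for degree reasons, so the class is free over $P(n)^*$. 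This is the step you must supply. Finally, your assertion that the differentials ``kill all the $\Lambda(Q_0,\dots,Q_{n-1})\{a'\}$ classes except $Q_1\cdots Q_{n-1}a'$'' is not correct: each weight-zero monomial $Q_0\cdots\hat Q_i\cdots Q_{n-1}a'$ is a permanent cycle and is not a boundary (the image of $d_{2^n-1}$ lies in $v_{n-1}\cdot E_{2^n-1}$), so all of them survive; the theorem singles out $i=0$ only for the displayed inclusion, which is why ``$\supset$'' rather than ``$\cong$'' is claimed.
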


Recall $ a=\rho^{n+1}.
\quad a'=a\tau^{-1}.$ 
So $w(a')=n-1$.  In fact, we have  
\[ CH^*(X)/2\cong H^{2*,*}(X;\bZ/2) 
\cong 
\bZ/2\{Q_0...\hat Q_i...Q_{n-1}(a') |
0\le i\le n-1\}.\]

\end{document}